\newtheorem{theorem}{Theorem}[section]
\newtheorem{lemma}[theorem]{Lemma}
\theoremstyle{definition}
\theoremstyle{remark}
\numberwithin{equation}{section}
\newcommand{\GL}{{\mathrm {GL}}}
\newcommand{\Aut}{{\mathrm {Aut}}}
\newcommand{\Irr}{{\mathrm {Irr}}}
\newcommand{\Soc}{{\mathrm {Soc}}}
\newcommand{\Stab}{{\mathrm {Stab}}}
\newcommand{\bF}{{\mathbf{F}}}
\newcommand{\GF}{\mbox{GF}}
\newcommand{\bC}{{\mathbf{C}}}
\newcommand{\bN}{{\mathbf{N}}}
\newcommand{\bZ}{{\mathbf{Z}}}
\newcommand{\Sy}{\textup{\textsf{S}}}
\begin{document}

\title[Odd-degree representations]
{A lower bound for the number of odd-degree\\ representations of a
finite group}

\author[N.\,N. Hung]{Nguyen Ngoc Hung}
\address{Department of Mathematics, The University of Akron, Akron,
OH 44325, USA} \email{hungnguyen@uakron.edu}

\author[T.\,M. Keller]{Thomas Michael Keller}
\address{Department of Mathematics, Texas State University, 601 University Drive, San Marcos, TX 78666, USA}
\email{keller@txstate.edu}

\author[Y. Yang]{Yong Yang}
\address{Department of Mathematics, Texas State University, 601 University Drive, San Marcos, TX 78666, USA}
\email{yang@txstate.edu}

\thanks{
This work was initiated when the first author
visited the Department of Mathematics at Texas State University
during Fall 2017, and he would like to thank the department for its
hospitality. The third author is partially supported by a grant from
the Simons Foundation (No 499532)}

\subjclass[2010]{Primary 20C15, 20D10, 20D05}

\keywords{finite groups, odd-degree representations, characters,
coprime action}


\begin{abstract}
Let $G$ be a finite group and $P$ a Sylow $2$-subgroup of $G$. We
obtain both asymptotic and explicit bounds for the number of
odd-degree irreducible complex representations of $G$ in terms of
the size of the abelianization of $P$. To do so, we, on one hand,
make use of the recent proof of the McKay conjecture for the prime 2
by Malle and Sp\"{a}th, and, on the other hand, prove lower bounds
for the class number of the semidirect product of an odd-order group
acting on an abelian $2$-group.
\end{abstract}

\maketitle


\section{Introduction}

This paper is concerned with the problem of bounding the number of
$p'$-degree irreducible (complex) representations of a finite group
in terms of its $p$-local structure, where $p$ is a prime.

The number of irreducible representations of $p'$-degree of a finite
group is one of the important invariants that is predicted to be
determined locally from the $p$-local information of the group.
Indeed, the well-known McKay conjecture
\cite{McKay,Isaacs-Malle-Navarro} asserts that
\[
|\Irr_{p'}(G)|= |\Irr_{p'}(\bN_G(P))|,
\]
where $\bN_G(P)$ is the normalizer of a Sylow $p$-subgroup $P$ in
$G$ and, as usual, $\Irr_{p'}(G)$ denotes the set of irreducible
representations of $p'$-degree of $G$. Although the conjecture is
still unsolved in general, it was confirmed for all finite groups
and the prime $p=2$, thanks to the work Malle and Sp\"{a}th
\cite{Malle-Spath}.

Note that every irreducible character of $\bN_G(P)$ of degree
coprime to $p$ must lie over a linear character of $P$. Moreover,
every irreducible character of $\bN_G(P)/P'$ has $p'$-degree.
Therefore,
\[|\Irr_{p'}(\bN_G(P))|=|\Irr_{p'}(\bN_G(P)/P')|=|\Irr(\bN_G(P)/P')|,\]
which is the same as $k(\bN_G(P)/P')$, the number of conjugacy
classes of $\bN_G(P)/P'$.

In view of \cite{Malle-Spath}, in this paper, we focus on the prime
$p=2$ and attempt to bound $|\Irr_{2'}(G)|$ in terms of $|P/P'|$ -
the size of the abelianization of $P$. Our first result is an
asymptotic bound.

\begin{theorem}\label{theorem11}
Let $G$ be a finite group and $P$ a Sylow $2$-subgroup of $G$. Then
the number of odd-degree irreducible representations of $G$ is at
least $|P/P'|^\alpha$, where $\alpha>0$ is a universal constant.
\end{theorem}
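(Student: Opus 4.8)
The plan is to convert the statement, via the reductions recorded in the introduction, into a purely local class-number estimate. By the Malle--Sp\"{a}th theorem (the McKay conjecture for $p=2$) together with the computation in the introduction, the number of odd-degree irreducible representations equals $|\Irr_{2'}(G)| = |\Irr(\bN_G(P)/P')| = k(\bN_G(P)/P')$, so it suffices to bound $k(\bN_G(P)/P')$ below by $|P/P'|^\alpha$. Writing $N = \bN_G(P)$ and $A = P/P'$, the subgroup $A$ is a normal, hence Hall, Sylow $2$-subgroup of $N/P'$, so Schur--Zassenhaus gives $N/P' = A \rtimes H$ with $A$ an abelian $2$-group and $H$ of odd order; by Feit--Thompson $H$ is solvable. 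Thus everything reduces to proving that $k(A \rtimes H) \ge |A|^\alpha$ whenever an odd-order group $H$ acts (necessarily coprimely) on an abelian $2$-group $A$. Moreover, if $K = \bC_H(A)$ is the kernel of the action, then $K \le H_\lambda$ for every $\lambda \in \Irr(A)$, whence $k(A \rtimes H) \ge k(A \rtimes (H/K))$, and we may assume $H$ acts faithfully.

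For the core estimate I would use Clifford theory in this coprime, split, abelian situation: each $\lambda \in \Irr(A)$ extends to its inertia group $A \rtimes H_\lambda$, and Gallagher's theorem gives $|\Irr(A \rtimes H \mid \lambda)| = k(H_\lambda)$. Hence
\[
k(A \rtimes H) = \sum_{\lambda \in \Irr(A)/H} k(H_\lambda),
\]
the sum running over $H$-orbit representatives on $\Irr(A)$. Two immediate consequences are $k(A \rtimes H) \ge k(H)$ (from the trivial character, whose inertia group is all of $H$) and $k(A \rtimes H) \ge n$, where $n$ is the number of $H$-orbits on $\Irr(A)$; by Brauer's permutation lemma $n$ also equals the number of $H$-orbits on $A$.

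The argument then proceeds by induction on $|A|$. Since coprime action is completely reducible, I would split off a minimal $H$-invariant subgroup $B \le A$ together with an $H$-invariant complement $C$, and refine the formula above in stages to obtain the recursion
\[
k(A \rtimes H) = \sum_{\mu \in \Irr(B)/H} k(C \rtimes H_\mu).
\]
Applying the inductive bound $k(C \rtimes H_\mu) \ge |C|^\alpha = (|A|/|B|)^\alpha$ to every term reduces the whole problem, in favourable cases, to showing that the number of $H$-orbits on $\Irr(B)$ is at least $|B|^\alpha$.

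The main obstacle is exactly this last point, and it is genuine rather than technical: when $H$ contains a Singer-type cycle $\FF_{|B|}^\times$ the action on $\Irr(B)$ has only two orbits, so a bound by the orbit count alone is simply false, and one must instead extract the growth from the sizes of the inertia subgroups $H_\mu$, which are large precisely when the orbits are few (in the Singer case the trivial-orbit term $k(C \rtimes H) \ge k(H)$ is what carries the estimate). Controlling this trade-off is where I expect the real work to lie: one needs quantitative information about odd-order coprime linear groups, for instance lower bounds on $k(H)$ when $H$ is large, or the existence of suitably large or regular orbits of $H$ on $A$, in the spirit of the known orbit theorems for solvable coprime linear groups (available here since $H$ is solvable by Feit--Thompson). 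Balancing the weak general estimate $k(H) \gtrsim \log |H|$ against the orbit bound $n \ge |A|/|H|$, and feeding the outcome back into the recursion, is what ultimately pins down an explicit, if small, admissible value of the universal constant $\alpha$.
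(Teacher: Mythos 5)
Your reduction is exactly the paper's: Malle--Sp\"{a}th gives $|\Irr_{2'}(G)|=k(\bN_G(P)/P')$, Schur--Zassenhaus splits $\bN_G(P)/P'$ as $A\rtimes H$ with $A=P/P'$ abelian and $H$ of odd order (solvable by Feit--Thompson), and factoring out $\bC_H(A)$ is harmless; your Clifford--Gallagher identity $k(A\rtimes H)=\sum_{\lambda\in\Irr(A)/H}k(H_\lambda)$ is just the character-side twin of the $k(GV)$-formula the paper uses. The gap is in the core estimate $k(A\rtimes H)\geq|A|^{\alpha}$, and you have correctly diagnosed but not closed it. Your recursion $k(A\rtimes H)=\sum_{\mu\in\Irr(B)/H}k(C\rtimes H_\mu)$ is valid, but the rescue you propose --- balancing $k(H)\geq c\log|H|$ against the orbit count $n>|A|/|H|$ --- provably cannot yield a power bound: when $|H|$ is of order $|A|^{1/2}$ or larger, these two estimates combine to give only $k(A\rtimes H)\gtrsim\log|A|$, which is the content of the weaker Theorem~\ref{theorem1}, not Theorem~\ref{theorem11}. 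Worse, no bound of the form $k(H_\mu)\geq|H_\mu|^{\alpha}$ is available for the inertia terms, since there exist solvable (indeed odd-order) groups whose class number is only polylogarithmic in their order --- the conjecture $k(X)\geq\log_3|X|$ is, as the introduction notes, still open and essentially sharp in spirit. So the recursion on invariant summands cannot close by itself; the difficulty already occurs when $A$ is irreducible, and summing inductive terms does not manufacture polynomial growth that the base ingredients lack.

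What the paper does instead bypasses induction entirely, using two ingredients absent from your sketch. First, the Hartley--Turull lemma \cite[Lemma 2.6.2]{hartley-turull} replaces $A$ by an elementary abelian group $B$ with $|B|=|A|$ on which $H$ acts permutation-isomorphically; since the permutation isomorphism $\phi$ satisfies $\bC_H(a)=\bC_H(\phi(a))$, the $k(GV)$-formula gives $k(A\rtimes H)=k(B\rtimes H)$, so one may assume $A$ is elementary abelian, whence (faithfulness having been arranged) $A=\bF(A\rtimes H)$ and the Frattini subgroup of $A\rtimes H$ is trivial. Second, Keller's Theorem~B \cite{Keller} supplies a universal $\alpha>0$ with $k(X)\geq|X|^{\alpha}$ for every \emph{solvable group $X$ with trivial Frattini subgroup}; applied to $X=A\rtimes H$ this gives $k(A\rtimes H)\geq|A\rtimes H|^{\alpha}\geq|A|^{\alpha}$ at once. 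It is precisely the Frattini-free hypothesis that restores polynomial class-number growth, and precisely Hartley--Turull that makes it available; the ``real work'' you flag at the end of your proposal is therefore not an implementation detail of your scheme but the entire theorem, and it is outsourced to \cite{Keller} rather than recovered by the balancing argument you describe.
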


The aforementioned result of Malle and Sp\"{a}th allows us to reduce
Theorem~\ref{theorem11} to the problem on bounding the class number
of a semidirect product of an odd-order group acting on an abelian
$2$-group. We then make use of Feit-Thompson's theorem on
solvability of odd-order groups, the Hartley-Turull lemma
\cite[Lemma 2.6.2]{hartley-turull} on isomorphic coprime group
actions, and a result of the second-named author
(\cite[Theorem~B]{Keller}) on bounding class numbers of solvable
groups with trivial Frattini subgroup. Unfortunately, this approach
does not provide us an explicit value of the bounding constant
$\alpha$.

The problem of bounding the class number $k(G)$ of a finite group
$G$ in terms of $|G|$ is important in Group Theory and has a long
history, dating back to the work of Landau \cite{Landau}. Though it
has been studied extensively in the literature, it is still open
whether $k(G)$ is logarithmically bounded below by $|G|$ (in fact,
it has been believed that $k(G)\geq\log_3|G|$ for every $G$), see
\cite{Keller,Maroti16,Baumeister-Maroti-TongViet} for the latest
results and discussion.

Combining the class number conjecture with the McKay conjecture, we
see that the number of $p'$-degree irreducible characters of a
finite group $G$ should be bounded below by $\log_3|P/P'|$ (or even
$\log_2|P/P'|$ as in the case $p=2$ below), where $P$ is a Sylow
$p$-subgroup of $G$. We confirm this expectation for $p=2$. The
following result, although asymptotically weaker than
Theorem~\ref{theorem11}, offers an explicit bound (and much better
bound when $|P/P'|$ is small) for $|\Irr_{2'}(G)|$.

\begin{theorem}\label{theorem1}
Let $G$ be a finite group and $P$ a Sylow $2$-subgroup of $G$. Then
the number of odd-degree irreducible representations of $G$ is
greater than $\log_2(|P/P'|)$.
\end{theorem}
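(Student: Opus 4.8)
The plan is to graft the group-theoretic core onto the reduction already set up in the introduction. By the theorem of Malle--Sp\"ath and the two displayed identities preceding the statement, $|\Irr_{2'}(G)| = k(\bN_G(P)/P')$. Writing $H := \bN_G(P)/P'$ and $A := P/P'$, the subgroup $A$ is a \emph{normal} abelian Sylow $2$-subgroup of $H$ with $[H:A]=[\bN_G(P):P]$ odd, so Schur--Zassenhaus gives a complement $K$ of odd order and $H = A \rtimes K$. Thus it suffices to prove the purely group-theoretic assertion that $k(A\rtimes K) > \log_2|A|$ whenever an odd-order group $K$ acts on an abelian $2$-group $A$; since $|A|$ is a power of $2$, this is equivalent to $k(A\rtimes K) \ge \log_2|A| + 1$.

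The main tool I would use is the Clifford-theoretic formula for the class number. Because $A$ is abelian and normal and $\gcd(|A|,|K|)=1$, each $\lambda\in\Irr(A)$ extends to its inertia group $A\rtimes K_\lambda$, where $K_\lambda=\Stab_K(\lambda)$, and Gallagher's theorem yields $|\Irr(H\mid\lambda)|=k(K_\lambda)$. Summing over orbit representatives gives
\[
k(A\rtimes K)=\sum_{[\lambda]\in\Irr(A)/K} k(K_\lambda).
\]
Isolating the trivial character $\lambda=1_A$, whose stabilizer is all of $K$, and bounding every remaining term below by $1$, I obtain $k(A\rtimes K)\ge k(K)+n_0$, where $n_0$ is the number of nontrivial $K$-orbits on $\Irr(A)$. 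By Brauer's permutation lemma the $K$-orbits on $\Irr(A)$ and on $A$ are equinumerous, and partitioning $A\setminus\{1\}$ into orbits of size at most $|K|$ gives $n_0\ge(|A|-1)/|K|$.

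To finish I would induct along a $K$-composition series $1=A_0<\cdots<A_r=A$, reorganizing the sum above by the restriction $\lambda\mapsto\lambda|_{A_1}$ so as to reduce to the case where $A=V$ is an irreducible $\FF_2K$-module. Passing to $K/C_K(V)$, I may assume $K$ acts faithfully and irreducibly on $V=\FF_2^d$, and the target becomes $k(K)\ge d$ (which, combined with at least one nontrivial orbit, yields $k(V\rtimes K)\ge d+1$). Here Schur's lemma is decisive: the endomorphism ring $\End_{\FF_2K}(V)$ is a finite field $\FF_{2^e}$ with $e\mid d$, so $K\le\GL(d/e,2^e)$ acts absolutely irreducibly, its centre $\bZ(K)$ embeds into $\FF_{2^e}^{\times}$, and one is reduced to controlling cyclic (and then metacyclic) odd linear groups; for these $k(K)\ge d$ because an irreducible cyclic subgroup of $\GL(d,2)$ must already have order at least $d+1$, forcing the multiplicative order of $2$ modulo $|K|$ to equal $d$.

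The main obstacle is precisely this irreducible base case together with the bookkeeping needed to propagate it through the composition series. The naive estimate $n_0\ge(\text{number of nontrivial orbits})$ is \emph{too weak} on a single large chief factor: a Singer cycle $K=C_{2^d-1}$ acting on $V=\FF_{2^d}$ produces only one nontrivial orbit, far fewer than $d$. The argument must instead recover the missing classes from the \emph{largeness of $k(K)$} that such a transitive action forces, and I expect balancing these two effects at each stage of the series --- quantitatively trading few orbits against many classes of the stabilizers --- to be the step requiring the most care. This is exactly the difficulty that the asymptotic Theorem~\ref{theorem11} sidesteps by invoking Feit--Thompson, the Hartley--Turull lemma, and \cite[Theorem~B]{Keller} in place of an explicit count.
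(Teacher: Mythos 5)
Your reduction is exactly the paper's: Malle--Sp\"ath plus Schur--Zassenhaus turns the statement into the purely group-theoretic claim that $k(A\rtimes K)>\log_2|A|$ for an odd-order group $K$ acting on an abelian $2$-group $A$ (the paper's Theorem~\ref{theorem2}/\ref{theorem3}), and your Clifford-theoretic class-number formula is equivalent, via Brauer's permutation lemma, to the orbit formula $k(GV)=\sum_i k(\bC_G(v_i))$ that the paper takes from \cite[Proposition 3.1]{Smith07}. The genuine gap is in the irreducible base case, which is the entire content of the paper's Sections 4 and 5, and your treatment of it contains a false step: Schur's lemma does \emph{not} reduce a faithful irreducible odd-order subgroup $K\leq\GL(d,2)$ to ``cyclic (and then metacyclic)'' groups. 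It only identifies $\End_{\FF_2 K}(V)\cong\FF_{2^e}$ and embeds $\bZ(K)$ in $\FF_{2^e}^{\times}$; the group itself can be imprimitive (a subgroup of $H\wr S$ with $S$ an odd-order transitive permutation group) or quasi-primitive with nonabelian Fitting subgroup built from extraspecial $r$-groups, and these are precisely the cases the paper must work hardest on. For the non-semi-linear induced case the paper proves a regular-orbit theorem (Theorem~\ref{thm3}, resting on Theorem~\ref{thm2}, Yang's sharp bound of $212$ regular orbits, Espuelas's $\dim\bC_V(g)\leq\frac{4}{9}\dim V$, Dolfi's lemma, and a strongly-regular set-orbit count, Lemma~\ref{lemoddperm}); for the semi-linear induced case it combines a metabelian class-number bound of Bertram with Seager's Theorem~\ref{thmseager} to force $n\leq 15$, and then disposes of eight residual $(n,k)$ configurations by explicit arithmetic. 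None of that is recoverable from the cyclic computation you give.

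Moreover, your proposed target $k(K)\geq d$ for the faithful irreducible case is both unproven and not the right invariant to aim at. In general one only knows $|K|\geq d+1$ (any nontrivial irreducible submodule of the regular module has dimension at most $|K|-1$), and the best unconditional lower bounds for class numbers of odd-order groups are logarithmic in $|K|$, hence roughly $\log(d)$ --- far short of $d$. What actually carries the paper's proof is not $k(K)$ alone but the interplay between orbit counts and stabilizer classes: many regular orbits in the non-semi-linear case, and in the semi-linear case the lower bound $(++)$, $k(GV)\geq n/k+1$, supplemented by Frobenius-type estimates such as $k(KV)\geq|K|+(|V|-1)/|K|$. You correctly anticipate that ``balancing few orbits against many stabilizer classes'' is the crux (your Singer-cycle example is apt, and there the cyclic estimate does suffice), but the proposal stops at naming this difficulty rather than resolving it, and the resolution in the paper occupies the regular-orbit machinery and the final case analysis in full. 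As written, the proof does not go through.
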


Theorem \ref{theorem1} can be considered as a generalization of a
classical result of Burnside's theorem stating that every even-order
group has a nontrivial irreducible representation of odd degree.

As explained above, Theorems~\ref{theorem11} and \ref{theorem1} are
consequences of the following result on bounding the class number.

\begin{theorem}\label{theorem2}
Let $G$ be an odd-order group acting on an abelian group $A$ of
order $2^n$. Then $k(GA)>n$. Moreover, $k(GA)\geq 2^{\alpha n}$
where $\alpha>0$ is a universal constant.
\end{theorem}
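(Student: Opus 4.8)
The plan is to prove the two assertions through a common reduction and then by separate arguments. First I would invoke the Feit--Thompson theorem, so that $G$, and hence $GA$, is solvable, together with the Hartley--Turull lemma, which for the coprime action of $G$ on the abelian $2$-group $A$ produces a $G$-equivariant bijection between $A$ and the direct sum $V$ of the $G$-composition factors of $A$. Since $V$ is elementary abelian of the same order $2^n$, carries a completely reducible $G$-action, and has the same stabiliser data as $A$, this gives $k(GA)=k(GV)$ with $\dim_{\FF_2}V=n$; replacing $G$ by $G/\Centralizer_G(V)$ (which only decreases the class number) I may assume the action is faithful. Throughout I would use the coprime Clifford formula: for any $G$-submodule $W\le V$,
\[ k(GV)=\sum_{[\nu]\in\widehat{W}/G}k\bigl(G_\nu\,(V/W)\bigr), \]
where $\widehat{W}$ is the dual module and $G_\nu$ is the stabiliser of $\nu$ in $G$ acting on $V/W$; this holds because, by coprimality, each $\nu\in\widehat{W}$ extends to its inertia group $G_\nu V$, whence Gallagher's theorem gives exactly $k(G_\nu\,(V/W))$ irreducible characters of $GV$ over the orbit $[\nu]$.

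For the explicit bound I would induct on $\dim V$. If $V$ is irreducible and trivial then $\dim V=1$ and $k(GV)=2k(G)>1$; the irreducible nontrivial case is isolated as a lemma below. If $V$ is reducible, complete reducibility lets me write $V$ as a direct sum of $m\ge 2$ irreducibles and choose $W$ to be a summand of minimal dimension, so that $\dim W\le n/2\le\dim(V/W)$. Applying the displayed formula and the inductive hypothesis to each pair $(G_\nu,V/W)$ (legitimate since $\dim(V/W)<n$, and trivially true when $G_\nu=1$ because $k(V/W)=2^{\dim(V/W)}$), every summand exceeds $\dim(V/W)$. As $W\ne 0$ forces at least the zero orbit and one further orbit, the number of orbits $r_W\ge 2$, and therefore $k(GV)>r_W\dim(V/W)\ge 2\dim(V/W)\ge\dim W+\dim(V/W)=n$, as required.

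The crux is the base case, which I state as a lemma: if the odd-order group $G$ acts faithfully and irreducibly on $V=\FF_2^d$ with $d\ge 2$, then $k(G)\ge d$ (equivalently $k(GV)\ge d+1$, since the zero orbit alone contributes $k(G)$ and there is at least one nonzero orbit). To prove it I would pass to the endomorphism field $E=\End_{\FF_2G}(V)=\FF_{2^e}$, so that $V\cong E^{f}$ with $d=ef$ and $G\le\GL_f(E)$ acting absolutely irreducibly over $E$; over $\overline{\FF}_2$ the module splits into $e$ Galois-conjugate absolutely irreducible constituents of dimension $f$, which, as $|G|$ is odd, lift to $e$ distinct faithful irreducible complex characters of degree $f$ forming a single $\Gal$-orbit. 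The remaining task is to leverage the cyclic centre $Z(G)$ (faithfulness makes it cyclic, acting by scalars in $E^{\times}$) together with the solvable structure of $G$ to manufacture enough further irreducible characters that their total count reaches $ef=d$. I expect this counting step to be the main obstacle: one must convert the single Galois family of degree-$f$ characters, together with the central and linear characters, into the uniform bound $k(G)\ge d$, and this is exactly where the Singer-type situation (a single regular orbit of $G$ on $V\setminus\{0\}$, forcing $k(G)=|G|$ to be large) must be reconciled with the opposite extreme.

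For the asymptotic bound I would argue directly on $GV$. Because $V$ is a completely reducible normal subgroup complemented in $GV$, every chief factor inside $V$ is complemented and hence lies outside a suitable maximal subgroup of $GV$; consequently $V\cap\Phi(GV)=1$, so $V$ embeds into $GV/\Phi(GV)$ and $|GV/\Phi(GV)|\ge|V|=2^n$. The quotient $GV/\Phi(GV)$ is solvable with trivial Frattini subgroup, so Keller's Theorem~B applies and yields a bound of the form $k(GV/\Phi(GV))\ge|GV/\Phi(GV)|^{\alpha_0}$ for a universal $\alpha_0>0$. Since $k(GV)\ge k(GV/\Phi(GV))$, I conclude $k(GA)=k(GV)\ge(2^n)^{\alpha_0}=2^{\alpha_0 n}$, which is the asserted bound with $\alpha=\alpha_0$. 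The two parts together establish the theorem, with the irreducible class-number lemma being the only genuinely hard input.
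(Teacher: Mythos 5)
Your reduction and your asymptotic argument are essentially the paper's own: Feit--Thompson plus the Hartley--Turull lemma to replace $A$ by an elementary abelian module with the same class-counting data, the orbit formula $k(GV)=\sum_i k(\bC_G(v_i))$, triviality of the Frattini subgroup, and Keller's theorem --- this is the paper's Theorem~\ref{theorem-111} almost verbatim, and your detour through $GV/\Phi(GV)$ changes nothing since $\Phi(GV)=1$ here. Likewise, your dual Clifford--Gallagher summation in the reducible case is just the character-theoretic form of the orbit identity the paper imports from \cite[Lemma 4.3]{kellerkgv} in Step~3 of the proof of Theorem~\ref{theorem3}, and your arithmetic there ($r_W\ge 2$ summands, each exceeding $\dim(V/W)\ge n/2$) matches the paper's. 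Up to that point the proposal is sound.

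The irreducible case, however, is a genuine gap, and not one that can be closed in the form you set it up. First, your parenthetical ``equivalently'' is wrong: the induction only needs $k(GV)>d$, and since $k(GV)=\sum_i k(\bC_G(v_i))$ runs over \emph{all} orbits, the nonzero orbits contribute; demanding $k(G)\ge d$ is strictly stronger. Second, that stronger lemma is false. Take $E$ extraspecial of order $11^3$ and exponent $11$, and $G=E\rtimes Z_3$ with $Z_3\le \Sp(2,11)=\SL(2,11)$ acting irreducibly and fixed-point-freely on $E/\bZ(E)$ (possible since $\FF_{11}$ contains no nontrivial cube roots of unity). Counting $\Irr(G)$: three linear characters, $(11^2-1)/3=40$ characters of degree $3$, and $10\cdot 3=30$ of degree $11$, so $k(G)=73$ (and indeed $3+40\cdot 9+30\cdot 121=|G|=3993$). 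Yet every faithful irreducible $\FF_2G$-module has dimension at least $11\cdot\ord_{11}(2)=110$: faithfulness forces a nontrivial central character $\zeta_{11}$, and $\FF_2$-rationality forces the entire length-$10$ Frobenius orbit of a degree-$11$ Brauer character to occur; a module of dimension exactly $110$ exists because there are no Schur index obstructions over finite fields. So $k(G)=73<110\le d$, and your base lemma fails. Theorem~\ref{theorem2} survives in such examples only because $G$ has very many orbits on $V$ --- which is precisely the point: the missing classes must be manufactured as orbits on $V$, not inside $k(G)$. That is where the paper invests all of its real effort: the regular-orbit theorems for odd-order non-semilinear quasi-primitive and induced actions (Theorems~\ref{thm2} and \ref{thm3}, producing at least $\max(n,212)$ regular orbits, with Lemma~\ref{lemoddperm} on strongly regular set-orbits as input), and, in the semilinear-induced case, Bertram's bound $k(KV)>|KV|^{1/3}$ for a metabelian normal subgroup together with Dixon's $(\sqrt3)^{m-1}$ bound and Seager's theorem to force $n\le 15$, followed by the case-by-case analysis of Step~7. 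Your sketched counting (the Galois family of degree-$f$ complex characters plus central and linear characters) cannot be completed, because the inequality it targets is simply not true.
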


Let $f(n)$ be the best bounding function in Theorem~\ref{theorem2}.
To give the reader a sense of this function at some small values, we
have found that $f(0)=1$, $f(1)=2$, $f(2)=4$, $f(3)=8$, $f(4)=8$,
$f(5)=16$, $f(6)=16$, and $f(7)=32$. This suggests that the values
of $f$ are perhaps always $2$-powers, and it would be interesting to
confirm this phenomenon.

Proving $p$-odd versions of Theorems~\ref{theorem11}, \ref{theorem1}
and \ref{theorem2} appears to be a hard problem, one reason being
that the group $G$ in Theorem~\ref{theorem2} is not necessarily
solvable anymore. However, although several results we prove along
the way are specified to odd-order group actions only, we expect
that the ideas can be developed further to treat other groups.

To end this introduction we remark that Malle and Mar\'{o}ti
\cite{Malle-Maroti} have recently proved that $|\Irr_{p'}(G)|\geq
2\sqrt{p-1}$ for every prime $p$ and every finite group $G$ of order
divisible by $p$. Our proposed bound $|\Irr_{p'}(G)|> \log_2 |P/P'|$
is consistent with, but in many cases significantly stronger than,
Malle-Mar\'{o}ti's bound, as shown by the case $p=2$ in
Theorem~\ref{theorem1}.


\section{Some preliminary results}

In this section we collect some preliminary lemmas that will be
needed in the proof of the main results.

We start with a well-known result on bounding the order of an
odd-order permutation group.

\begin{lemma}\label{lem1}
Let $G$ be an odd-order subgroup of the symmetric group $\Sy_m$.
Then $|G| \leq (\sqrt 3)^{m-1}$.
\end{lemma}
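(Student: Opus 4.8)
The plan is to bound the order of an odd-order permutation subgroup $G \leq \Sy_m$ by induction on $m$, exploiting the fact that odd order forbids the large factors that make general permutation groups big. The extremal case should be the regular action of an elementary abelian $3$-group, which gives $|G| = 3^{m/3} = (3^{1/3})^m$; since $3^{1/3} < \sqrt{3}$ this is safely below the claimed bound, and the bound $(\sqrt 3)^{m-1}$ is attained at small cases such as $G = \ZZ/3$ acting on $m=3$ points, where $|G| = 3 = (\sqrt 3)^{3-1}$. So I expect the inequality to be tight precisely on a single orbit of size $3$ and to have room to spare elsewhere.

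First I would reduce to the case where $G$ acts transitively on $\{1,\dots,m\}$. If $G$ has orbits of sizes $m_1, \dots, m_k$ with $\sum m_i = m$, then $G$ embeds into the direct product of its images on the orbits, so $|G| \leq \prod_i |G_i|$ where each $G_i \leq \Sy_{m_i}$ is transitive of odd order. By induction it suffices to prove $|G_i| \leq (\sqrt 3)^{m_i - 1}$ for each transitive constituent, since then $|G| \leq \prod_i (\sqrt 3)^{m_i-1} = (\sqrt 3)^{m - k} \leq (\sqrt 3)^{m-1}$ as $k \geq 1$. Thus I would assume $G$ is transitive from now on.

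For the transitive case I would pass to a minimal block system and use the imprimitivity structure. Let $B$ be a block of size $b$ with $\ell = m/b$ blocks, chosen so that $G$ acts primitively on the set of blocks; write $\bar G \leq \Sy_\ell$ for the induced primitive action and $K$ for its kernel, so $K$ embeds into $(\Sy_b)^\ell$ componentwise. The key structural input is a bound on primitive odd-order groups: by a theorem on the order of primitive permutation groups of odd order (ultimately resting on the classification, or on the solvability from Feit--Thompson together with results on solvable primitive groups), a transitive odd-order group of degree $\ell$ has order substantially smaller than $(\sqrt 3)^{\ell-1}$ except in controlled cases built from the regular $\ZZ/3$ action. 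I would combine the bound $|\bar G| \leq (\sqrt 3)^{\ell-1}$ with the inductive bound $|K| \leq \prod |K_i| \leq (\sqrt 3)^{\ell(b-1)}$ on the base group, obtaining
\[
|G| = |K|\,|\bar G| \leq (\sqrt 3)^{\ell(b-1)} \cdot (\sqrt 3)^{\ell - 1} = (\sqrt 3)^{\ell b - 1} = (\sqrt 3)^{m-1},
\]
which closes the induction. The main obstacle is precisely the primitive base case: one must show that a primitive permutation group of odd degree $\ell$ and odd order satisfies $|\bar G| \leq (\sqrt 3)^{\ell - 1}$, and here the regular elementary abelian $3$-groups and affine-type examples need to be handled carefully, since these are exactly the groups achieving values closest to the bound. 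Establishing this sharp primitive estimate—rather than the routine induction—is where the real work lies, and it is where I would expect to invoke either the classification of primitive odd-order groups or a tailored argument using the structure of odd-order solvable primitive groups (a point stabilizer acting irreducibly on an elementary abelian socle).
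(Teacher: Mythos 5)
Your reduction framework is sound, and it is in fact the skeleton of the group-theoretic proof of Dixon that the paper cites: the paper gives no argument of its own for this lemma, only the references \cite{Dixon} (group-theoretic) and \cite{Alspach} (combinatorial). The intransitive-to-transitive step via $|G|\le\prod_i|G_i|$ and the exponent count $(\sqrt3)^{m-k}\le(\sqrt3)^{m-1}$ are correct, as is the wreath-type arithmetic $(\sqrt3)^{\ell(b-1)}\cdot(\sqrt3)^{\ell-1}=(\sqrt3)^{m-1}$ for a maximal block system (note: maximal, not ``minimal,'' blocks are what make the top action primitive, though your intent is clear). The genuine gap is the primitive base case: you never prove $|\bar G|\le(\sqrt3)^{\ell-1}$ for a primitive odd-order group of degree $\ell$, but instead ``invoke a theorem'' whose statement is exactly the bound you need. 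There is no off-the-shelf result that hands you this sharp constant; establishing the primitive estimate is precisely the substance of Dixon's paper. Nor do generic solvable bounds close it: by Feit--Thompson $G$ is solvable, so a primitive group of odd degree $\ell=p^t$ is affine, $G=VH$ with $|V|=\ell$ and $H\le\GL(t,p)$ irreducible of odd order, and the best general estimate of the shape $|H|\le|V|^{1.5}/24^{1/3}$ (the paper's Lemma~\ref{lemma-manz-wolf}) gives only $|G|\le\ell^{2.5}/24^{1/3}$, which \emph{exceeds} $(\sqrt3)^{\ell-1}$ for $\ell=3,5,7,9$ (at $\ell=3$ it gives about $5.4>3$; at $\ell=9$ about $84>81$). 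So the small primitive degrees require a hand analysis of odd-order irreducible linear groups, and that unwritten analysis is where the proof actually lives.

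A secondary error worth flagging: your extremal-case heuristic is wrong. Equality in the lemma holds if and only if $m$ is a power of $3$ and $G$ is a Sylow $3$-subgroup of $\Sy_m$ (the iterated wreath product $\ZZ/3\wr\cdots\wr\ZZ/3$, of order $3^{(m-1)/2}=(\sqrt3)^{m-1}$), not just in the single case $m=3$; the regular elementary abelian $3$-group is far from extremal and is not even primitive for $m>3$. This matters for your induction: since tightness propagates through every level of the wreath tower, there is no slack to spare in either the kernel bound or the primitive bound, so the primitive estimate really must be proved with the sharp constant $\sqrt3$ --- a weaker constant cannot be repaired later in the induction.
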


\begin{proof} See \cite{Dixon} for a group-theoretic proof and
\cite{Alspach} for a combinatorial proof. Note that the equality
holds if and only if $m$ is a power of $3$ and $G$ is a Sylow
$3$-subgroup of $\Sy_m$.
\end{proof}

The next two lemmas are important properties of odd-order group
actions.

\begin{lemma}\label{lemmaDolfi} Let $H$ be a (solvable) group of odd order, and let $V$ be a
finite, faithful and irreducible $H$-module. Then
\begin{itemize}
\item[(i)] $|V|$ cannot be the square of a Mersenne prime;

\item[(ii)] if $|V|=25$, then $|H|=3$;

\item[(iii)] if $|V|=81$, then $|H|=5$.
\end{itemize}
\end{lemma}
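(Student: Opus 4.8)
The unifying observation is that in all three cases $V$ has dimension a power of $2$ over its prime field: writing $|V| = r^{2^k}$ with $r$ an odd prime, we have $(r,k) = (q,1)$ in (i), $(5,1)$ in (ii), and $(3,2)$ in (iii). The plan is to first show that any odd-order group acting faithfully and irreducibly on a space of $2$-power dimension over a finite field of odd characteristic must be cyclic and lie in a Singer torus, and then to extract each conclusion from elementary arithmetic of $r^{2^k}-1$.

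First I would reduce to this torus case. Suppose $H$ has odd order and acts faithfully and irreducibly on $V$ with $\dim_{\FF_r} V = 2^k$, $k \geq 1$. If $H$ were imprimitive it would permute the summands of a system of imprimitivity transitively; the number of blocks is at least $2$ and divides $2^k$, hence is even, whereas the blocks of a transitive action of an odd-order group have odd index---a contradiction. Thus $H$ is primitive. Setting $E = \End_{\FF_r H}(V) = \FF_{r^e}$ with $e \mid 2^k$, the module $V$ is absolutely irreducible over $E$ of dimension $2^k/e$. Now I invoke the structure theory of primitive solvable linear groups (Feit--Thompson makes $H$ solvable): the Fitting subgroup of a primitive absolutely irreducible solvable group is a central product of a cyclic group with extraspecial groups, whose primes are exactly the prime divisors of $\dim_E V$, and these extraspecial subgroups lie inside $H$. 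Since $|H|$ is odd, no such prime can be $2$, so $\dim_E V$ is odd; as $\dim_E V = 2^k/e$, this forces $e = 2^k$ and $\dim_E V = 1$. Hence $H$ embeds in $E^\times = \FF_{r^{2^k}}^\times$ and is cyclic.

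With $H$ cyclic of odd order dividing $r^{2^k}-1$, irreducibility over $\FF_r$ means $H$ lies in no proper subfield; since the divisors of the prime power $2^k$ form a chain, the subfields form a chain and this amounts to the single condition $|H| \nmid r^{2^{k-1}}-1$. It then remains to read off each case. For (i), with $r = q = 2^p-1$ and $k=1$, the factor $q+1 = 2^p$ is a $2$-power, so the odd part of $q^2-1 = (q-1)(q+1)$ equals that of $q-1$; as $|H|$ is odd it must divide $q-1$, contradicting $|H| \nmid q-1$, so no such $H$ exists. For (ii), with $r=5$, $k=1$, the odd divisors of $5^2-1=24$ are $1$ and $3$, and only $3$ fails to divide $5-1=4$, giving $|H|=3$. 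For (iii), with $r=3$, $k=2$, the odd divisors of $3^4-1=80$ are $1$ and $5$, and only $5$ fails to divide $3^2-1=8$, giving $|H|=5$.

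The main obstacle is the reduction step: justifying that an odd-order primitive absolutely irreducible solvable linear group has odd dimension. For the two-dimensional cases (i) and (ii) this can be done by hand, since Dickson's classification of subgroups of $\PGL_2$ shows that an odd-order subgroup is cyclic, whence $H$ is abelian and, acting irreducibly, is by Schur's lemma a cyclic subgroup of the nonsplit torus $\FF_{q^2}^\times$. For the four-dimensional case (iii) I would either appeal to the structure theorem for primitive solvable linear groups or cite the corresponding result of Dolfi directly; the only delicate point is confirming that the extraspecial normal subgroups forced in even dimension are $2$-groups, and therefore incompatible with $|H|$ being odd.
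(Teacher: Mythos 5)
Your proposal is a genuine from-scratch argument, whereas the paper proves nothing here: it simply cites Dolfi's \cite[Lemma 2.4]{SD}. Your skeleton is the right one (parity kills imprimitivity; pass to $E=\End_{\FF_r H}(V)$; force $H$ into $\FF_{r^{2^k}}^{\times}$; finish with the subfield-chain arithmetic on $r^{2^k}-1$, which you carry out correctly in all three cases). But the pivotal structural claim is misquoted, and this is a real gap, not the cosmetic one you flag at the end. In the Manz--Wolf structure theorem for a quasi-primitive solvable linear group, $\bF(H)=ET$ with $T=\bZ(\bF(H))$ cyclic and $|\bF(H):T|=e^2$; the primes of the extraspecial components are the primes dividing $e$, and $e$ \emph{divides} $\dim_E V$ --- they are not ``exactly the prime divisors of $\dim_E V$.'' One can have $e=1$ in even dimension: for instance $\Gamma(5^2)$ acting on $\FF_5^2$ is primitive, absolutely irreducible, solvable, has cyclic Fitting subgroup $C_{24}$ and no extraspecial normal subgroup at all, with $\dim_E V=2$. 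So your inference ``even $\dim_E V$ forces an extraspecial $2$-subgroup of $H$, contradicting odd order'' does not follow; the parity of $|H|$ only yields that $e$ is odd, hence $e=1$ since $e\mid 2^k$. The missing ingredient is the other horn of the standard dichotomy (see \cite[Chapter 2]{manz-wolf}): when $e=1$, i.e.\ $\bF(H)$ is cyclic, quasi-primitivity forces $H\leq\Gamma(r^{2^k})$, and only then does oddness of $|H|$, combined with the fact that $\Gal(\GF(r^{2^k})/\GF(r))$ is a $2$-group, push $H$ into the torus $\FF_{r^{2^k}}^{\times}$. Your intended conclusion (odd order plus $2$-power dimension implies cyclic inside the Singer torus) is true, but as written your argument never excludes a non-abelian odd-order primitive $H$ with cyclic Fitting subgroup.

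Your dimension-$2$ fallback has the same defect: Dickson's classification does \emph{not} say that odd-order subgroups of $\PGL(2,q)$ are cyclic. The Borel subgroup contains $C_q\rtimes C_{(q-1)/2}$, which is non-abelian of odd order $q(q-1)/2$ whenever $q\equiv 3\pmod 4$ and $q>3$; since every Mersenne prime $q=2^p-1$ with $p\geq 3$ satisfies $q\equiv 3\pmod 4$, the failure occurs precisely in your case (i). These subgroups are eliminated only by invoking irreducibility: they have a nontrivial normal $r$-subgroup in the defining characteristic, whose nonzero, proper fixed-point space on $V$ would be $H$-invariant. With that extra step the odd-order \emph{irreducible} subgroups of $\GL(2,q)$ are indeed cyclic and lie in the Singer torus, and with the $e=1$ semilinear dichotomy inserted as above the general argument also goes through; so the proof is repairable, but the two classification statements it currently rests on are both false as stated.
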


\begin{proof}
This follows from \cite[Lemma 2.4]{SD}.
\end{proof}

\begin{lemma}\label{lemma-manz-wolf}
Let $G$ be an odd-order group and $V\neq 0$ be a faithful,
completely reducible and finite $G$-module. If $char(V)$ is odd,
then $|G|\leq |V|^{1.5}/(24)^{1/3}$.
\end{lemma}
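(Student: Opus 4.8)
The plan is to use the Feit--Thompson theorem to note that $G$ is solvable, regard $V$ as an $\FP$-module with $p$ odd, and then argue by induction on $\dim_{\FP}V$. I split into three cases: $V$ reducible, $V$ irreducible but not quasi-primitive, and $V$ quasi-primitive. The first two cases reduce the problem to strictly smaller modules while leaving slack in the constant, so the quasi-primitive case carries all the weight. I would record at the outset that $\bO_p(G)=1$: the normal $p$-subgroup $\bO_p(G)$ restricts semisimply to $V$ by Clifford's theorem, and a $p$-group has only the trivial irreducible module in characteristic $p$, so $\bO_p(G)$ acts trivially and hence is trivial; in particular $p\nmid|\bF(G)|$.

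For the reducible case, write $V=W\oplus U$ with $W,U$ nonzero proper $G$-submodules, which exist since $V$ is completely reducible and reducible. Put $N=\bC_G(U)\trianglelefteq G$. Then $N$ acts faithfully on $W$, because $N\cap \bC_G(W)=\bC_G(V)=1$, and $W$ is completely reducible for $N$ by Clifford's theorem; likewise $G/N$ acts faithfully and completely reducibly on $U$. By induction $|N|\le|W|^{3/2}/24^{1/3}$ and $|G/N|\le|U|^{3/2}/24^{1/3}$, so
\[
|G|\le \frac{(|W|\,|U|)^{3/2}}{24^{2/3}}=\frac{|V|^{3/2}}{24^{2/3}}\le\frac{|V|^{3/2}}{24^{1/3}},
\]
and the case is done with room to spare.

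For the imprimitive case, some $N\trianglelefteq G$ gives a nonhomogeneous decomposition $V|_N=W_1\oplus\cdots\oplus W_m$ with $m>1$ (in fact $m$ odd, dividing $|G|$) and $G$ permuting the $W_i$ transitively; by the Clifford correspondence $W_1$ is irreducible for $H=\Stab_G(W_1)$. Let $K$ be the kernel of $G\to\Sy_m$, a normal subgroup of $G$ contained in $H$. Then $|G/K|\le(\sqrt3)^{m-1}$ by Lemma~\ref{lem1}. Since $\bigcap_i \bC_K(W_i)=\bC_K(V)=1$, the group $K$ embeds in $\prod_i K/\bC_K(W_i)$, whose factors are $G$-conjugate and hence isomorphic to $\bar K:=K/\bC_K(W_1)$; as $\bar K$ acts faithfully and (by Clifford, $K\trianglelefteq H$) completely reducibly on $W_1$, induction gives $|\bar K|\le|W_1|^{3/2}/24^{1/3}$ and thus $|K|\le|\bar K|^m\le|V|^{3/2}/24^{m/3}$. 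Therefore
\[
|G|=|G/K|\,|K|\le (\sqrt3)^{m-1}\,\frac{|V|^{3/2}}{24^{m/3}}\le\frac{|V|^{3/2}}{24^{1/3}},
\]
the last inequality being $(\sqrt3)^{m-1}\le 24^{(m-1)/3}$, i.e.\ $27=(\sqrt3)^6\le 24^2=576$.

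The quasi-primitive case is where I expect the real difficulty, as no further reduction is available and one must invoke the structure theory of solvable quasi-primitive linear groups: here $\bC_G(\bF(G))=\bZ(\bF(G))$, the quotient $\bF(G)/\bZ(\bF(G))$ carries a nondegenerate symplectic form, $\bF(G)$ is a central product of a cyclic group with extraspecial (symplectic-type) factors $\prod_i r_i^{1+2n_i}$, the unique irreducible $\bF(G)$-constituent of $V$ has $\FQ$-dimension $e=\prod_i r_i^{n_i}$, and $G/\bF(G)$ embeds into $\prod_i\Sp(2n_i,r_i)$. I would bound $|\bF(G)|$ and the odd part of $|G/\bF(G)|$ against $|V|^{3/2}$ using the explicit orders of the symplectic groups together with the fact that the constituent dimension $e$ forces $|V|$ to be large relative to $|\bF(G)|$, controlling the action of $G/\bF(G)$ on the symplectic factors by Lemma~\ref{lem1}. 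The hard part is exactly the extraction of the clean constant $24^{1/3}$ from the extremal small symplectic-type configurations; the genuinely delicate small cases, namely $|V|=25$, $|V|=81$, and $|V|$ a square of a Mersenne prime, are precisely those pinned down or excluded by Lemma~\ref{lemmaDolfi}, which is why that lemma is recorded. Finally the base case $\dim_{\FP}V=1$, where $G\le\FQ^\times$ is cyclic of odd order dividing $q-1$ and $|G|<q=|V|\le|V|^{3/2}/24^{1/3}$ for all admissible $q$, is a direct check completing the induction.
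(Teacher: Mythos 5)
Your reducible and imprimitive reductions are correct and carefully justified: the faithfulness and complete reducibility hypotheses for the smaller modules are verified via Clifford's theorem, and the constant bookkeeping works ($24^{2/3}\geq 24^{1/3}$ in the direct-sum case, and $(\sqrt{3})^{m-1}\leq 24^{(m-1)/3}$, equivalently $3\sqrt{3}\leq 24$, in the induced case, with Lemma~\ref{lem1} controlling $|G/K|$). For comparison: the paper does not prove this lemma at all --- it simply cites Manz--Wolf \cite[Theorem 3.5]{manz-wolf} --- so you are in effect reconstructing the proof from that book, and your inductive scaffolding does match the standard reduction scheme there.

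However, there is a genuine gap: the quasi-primitive case, which you yourself say ``carries all the weight,'' is never actually proved. You correctly recite the structure theory (essentially Lemma~\ref{lem2} of the paper), but then only state that you ``would bound $|\bF(G)|$ and the odd part of $|G/\bF(G)|$ against $|V|^{3/2}$'' --- that is precisely the analytic content of the theorem, and none of the estimates are carried out. Beyond mere incompleteness, two concrete points need repair. First, it is not $G/\bF(G)$ that embeds in the product of symplectic groups: by Lemma~\ref{lem2}, only $A/\bF(G)$ with $A=\bC_G(Z)$ acts symplectically on $E/Z$, while the extra quotient $G/A\leq \Aut(Z)$ (and, when $e=1$, the whole semi-linear configuration $G\lesssim\Gamma(q^m)$ with $\bF(G)$ cyclic, a subcase your sketch omits entirely) must be bounded separately, using oddness of $|G|$ in an essential way --- e.g.\ for $|V|=p$ the naive bound $|G|\leq p-1$ already violates $|V|^{3/2}/24^{1/3}$ at $p=3$, and only $|G|\leq (p-1)/2$ rescues it. Second, the extremal small configurations that pin down the constant $24^{1/3}$ (such as $|V|=25$, $|V|=81$, and Mersenne-prime squares) require individual case analysis; pointing at Lemma~\ref{lemmaDolfi} identifies where the difficulty lives but is not a substitute for doing it. As written, the proposal establishes the reduction to the quasi-primitive case but not the theorem; either the missing estimates must be supplied or, as the paper does, the result should simply be quoted from \cite{manz-wolf}.
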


\begin{proof}
This follows from \cite[Theorem 3.5]{manz-wolf}.
\end{proof}

As usual we use $\bF(G)$ to denote the Fitting subgroup of $G$,
which is the minimal nilpotent normal subgroup of $G$, and $\Soc(G)$
to denote the subgroup generated by all minimal normal subgroups of
$G$, which is usually referred to as the socle of $G$.

The following provides detailed structure of an odd-order group that
acts faithfully, irreducibly, and quasi-primitively on a finite
vector space.

\begin{lemma}  \label{lem2}
Suppose that $V$ is a faithful, irreducible, and quasi-primitive
module for an odd-order group $G$ over the Galois field $\GF(q)$,
where $q$ is a prime power. Then $G$ has normal subgroups $Z
\subseteq T \subseteq \bF(G) \subseteq A\subseteq G$ such that:

\begin{itemize}
\item[(i)] $F:=\bF(G)$ is a central product $F=ET$ with
$E,T \triangleleft G$ and $T$ is cyclic, $Z=E \cap
T=\bZ(E)=\Soc(\bZ(F))$ and $T=\bZ(F)$, and Sylow subgroups of $E$
are of prime order or extra-special of prime exponent;

\item[(ii)] $A=\bC_G(Z)$ and, in particular, $G/A$ is isomorphic to a subgroup of $\Aut(Z)$;

\item[(iii)] $T=\bC_G(E)$, $\bF(G)=\bC_A(E/Z)$, and $E/Z$ is a faithful
completely reducible symplectic $A/\bF(G)$-module;


\item[(iv)] $|F:T|=e^2$ where $e$ is an integer dividing $\dim(V)$;

\item[(v)] if $W$ is an irreducible $T$-submodule of $V$,
then $T$ acts fixed point freely on $W$ and thus $|T|$ divides
$|W|-1$;

\item[(vi)] $|V|=|W|^{be}$ for some integer $b$ and $|G:A| \mid \dim(W)$.
\end{itemize}
\end{lemma}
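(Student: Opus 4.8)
The plan is to invoke the general structure theory of quasi-primitive faithful modules of solvable linear groups, specializing it to the odd-order case. Since $G$ has odd order, the Feit--Thompson theorem makes $G$ solvable, so the machinery of \cite{manz-wolf} applies; in particular $\bC_G(\bF(G))=\bZ(\bF(G))$. The heart of the argument is part (i). Quasi-primitivity forces every normal subgroup of $G$ to act homogeneously on $V$, and an abelian group acting homogeneously and faithfully is cyclic by Schur's lemma; hence $\bZ(\bF(G))$ is cyclic, and we set $T=\bZ(\bF(G))$. Writing $F=\bF(G)$ as the direct product of its Sylow subgroups $F_r$ (each characteristic in $F$, hence normal in $G$ and homogeneous on $V$), I would apply the standard classification of $r$-groups admitting a faithful homogeneous representation: each $F_r$ is a central product $Z_r \ast X_r$ of a cyclic group $Z_r=\bZ(F_r)$ with a group $X_r$ that is trivial or extraspecial, and since $r$ is odd $X_r$ may be taken of exponent $r$. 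Setting $E_r=\Omega_1(Z_r)\ast X_r$ (so $E_r$ is cyclic of order $r$ when $X_r=1$, and extraspecial of exponent $r$ otherwise) and $E=\prod_r E_r$, one checks $F=ET$, $E\cap T=\bZ(E)=\Soc(\bZ(F))=:Z$, which is exactly the asserted shape.

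With (i) in hand, (ii) is immediate: $Z=\Soc(\bZ(\bF(G)))$ is characteristic in $G$, so conjugation gives a homomorphism $G\to\Aut(Z)$ with kernel $A:=\bC_G(Z)$, whence $G/A\hookrightarrow\Aut(Z)$. For (iii) I would first note $[F,E]=[E,E]\subseteq Z$, so $F\subseteq\bC_A(E/Z)$; conversely any element of $\bC_A(E/Z)$ stabilizes the normal series $1\le Z\le E$ (as $A$ centralizes $Z$), so $\bC_A(E/Z)$ is nilpotent by the stability-group theorem of Hall--Kaloujnine, normal in $G$, and therefore contained in $\bF(G)=F$; this yields $\bF(G)=\bC_A(E/Z)$. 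The commutator map $E/Z\times E/Z\to Z$ is an alternating nondegenerate bilinear form preserved by $A$ (since $A$ centralizes $Z$), so $A/F$ embeds in the corresponding symplectic group and acts faithfully by construction, complete reducibility coming from the prime-by-prime decomposition of $E/Z$. Finally $\bC_G(E)\subseteq\bC_G(Z)=A$ and $\bC_G(E)\subseteq\bC_A(E/Z)=F$, so $\bC_G(E)=\bC_F(E)=T$.

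For the numerical parts I would use the representation theory of the $E_r$. Since $F/T\cong E/Z=\prod_r E_r/Z_r$ and each $E_r/Z_r$ is a symplectic $\FF_r$-space of even rank $2d_r$, we get $|F:T|=\prod_r r^{2d_r}=e^2$ with $e=\prod_r r^{d_r}$; the minimal faithful representation of $E_r$ has degree $r^{d_r}$, forcing $e\mid\dim W$ and hence $e\mid\dim V$, which is (iv). For (v), an irreducible $T$-submodule $W$ is faithful for the cyclic group $T$ (all such submodules are $G$-conjugate, hence isomorphic, by quasi-primitivity, so the kernel of $T$ on $W$ equals its kernel on $V$), so $T$ embeds in $\End_T(W)^\times=\FF_{|W|}^\times$ and acts fixed-point-freely with $|T|\mid|W|-1$. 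Part (vi) then follows from the homogeneity of $V|_T$ together with the Clifford/Galois analysis of how $G/A\hookrightarrow\Aut(Z)$ permutes the $T$-homogeneous structure, giving $|V|=|W|^{be}$ and $|G:A|\mid\dim W$.

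The main obstacle I anticipate is not any single step but assembling the self-centralizing identities $\bC_G(\bF(G))=\bZ(\bF(G))$, $\bF(G)=\bC_A(E/Z)$, and $T=\bC_G(E)$ in the correct order, so that the faithfulness of the symplectic $A/F$-action and the divisibility claim $|G:A|\mid\dim W$ in (vi) come out cleanly. These are precisely the points where \cite{manz-wolf} carries out the careful bookkeeping, and I would lean on that reference for the finer divisibility assertions rather than reprove them from scratch.
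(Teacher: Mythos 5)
Your proposal is correct and takes essentially the same route as the paper: the paper's entire proof consists of noting that quasi-primitivity together with Clifford's theorem forces every normal abelian subgroup of $G$ to be cyclic, and then citing \cite[Corollary 1.10]{manz-wolf} and \cite[Corollary 2.6]{manz-wolf} --- precisely the results whose internal arguments you reconstruct and to which you, too, ultimately defer for the finer bookkeeping. One small slip worth fixing: in your paragraph on (iv), the intermediate claim $e \mid \dim W$ is false when $W$ is an irreducible $T$-submodule, since $\dim W$ is the multiplicative order of $q$ modulo $|T|$ and is unrelated to $e$ (e.g.\ $e=5$, $T=Z$ of order $5$, $q=2$ gives $\dim W=4$, a configuration the paper itself meets in the proof of Theorem~\ref{thm2}); the divisibility $e \mid \dim V$ should instead be read off from the faithful irreducible constituents of $V|_E$, whose degrees are divisible by $e$, or simply from $\dim V = be\,\dim W$ once (vi) is in hand.
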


\begin{proof}
From the hypothesis and by the Clifford theorem, every normal
abelian subgroup of $G$ is cyclic. The lemma then follows from
\cite[Corollary 1.10]{manz-wolf} and \cite[Corollary
2.6]{manz-wolf}.
\end{proof}

\begin{lemma}  \label{lem3}
Suppose that $G$ is an odd-order (solvable) irreducible subgroup of
$\GL(n,q)$ and the action of $G$ on the natural vector space is
quasi-primitive. Let $e$ be defined as in Lemma~\ref{lem2}. Then $e
\neq 3, 7$.
\end{lemma}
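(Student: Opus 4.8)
The plan is to assume $e\in\{3,7\}$ and reach a contradiction by combining the fine structure of Lemma~\ref{lem2} with Lemma~\ref{lemmaDolfi}(i). Write $p=e$. By Lemma~\ref{lem2}(iv) and the central product decomposition $F=ET$ with $E\cap T=Z$ we have $p^2=e^2=|F:T|=|E:Z|$, and since $e=\prod_{\ell}\ell^{m_\ell}$ where $E_\ell$ is extra-special of order $\ell^{1+2m_\ell}$ (Lemma~\ref{lem2}(i)), the primality of $p$ forces $E$ itself to be extra-special of order $p^3$ and exponent $p$, with $Z=\bZ(E)$ cyclic of order $p$ and $E/Z\cong\FF_p^{2}$. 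The point to exploit is that $3=2^2-1$ and $7=2^3-1$ are exactly Mersenne primes. First I would set $H:=G/\bF(G)$ and check that $H$ acts faithfully on the symplectic module $E/Z$: by Lemma~\ref{lem2}(iii) the kernel of $A=\bC_G(Z)$ on $E/Z$ is $\bF(G)$, so it suffices to see $\bC_G(E/Z)\subseteq A$. This is the routine central-automorphism computation — if $g$ acts trivially on $E/Z$ then $[g,E]\subseteq Z$, and because $Z=[E,E]$ this forces $g$ to centralize $Z$, i.e.\ $g\in\bC_G(Z)=A$. Hence $\bC_G(E/Z)=\bF(G)$ and $H$ is an odd-order (hence solvable) group acting faithfully on the finite module $E/Z$.

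The heart of the argument is to upgrade this to \emph{irreducibility} of $E/Z$ as an $H$-module, and here the quasi-primitivity of $V$ is precisely the input I would use. Suppose instead that $E/Z$ had a proper nonzero $G$-invariant $\FF_p$-subspace, and let $M\le E$ be its full preimage, so $Z\subsetneq M\subsetneq E$ and $M\triangleleft G$. Since the commutator form on $E/Z$ is alternating, a one-dimensional subspace is totally isotropic, so $[M,M]=1$ and $M$ is abelian. Now a nontrivial abelian normal subgroup of a quasi-primitive linear group acts fixed-point-freely: because $V|_M$ is homogeneous and $M$ abelian, the image of $\FF_q M$ in $\End(V)$ is a field, so every non-identity element of $M$ acts without nonzero fixed vectors. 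But any $u\in M\setminus Z$ is a noncentral element of the extra-special group $E$, and on the faithful irreducible $E$-module $u$ acts with the full set of $p$-th roots of unity as eigenvalues — in particular with eigenvalue $1$, hence with a nonzero fixed vector. This contradiction rules out such an invariant subspace, so $E/Z$ is an irreducible $H$-module.

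At this stage $E/Z$ is a finite, faithful, irreducible module for the odd-order solvable group $H$, so Lemma~\ref{lemmaDolfi}(i) applies and tells us that $|E/Z|$ cannot be the square of a Mersenne prime; but $|E/Z|=p^2$ with $p\in\{3,7\}$ a Mersenne prime, the desired contradiction, giving $e\neq 3,7$. I expect the genuine difficulty to be the irreducibility step: one must translate quasi-primitivity of $V$ cleanly into the absence of $G$-invariant subspaces of $E/Z$, and in particular verify with care that a noncentral element of the extra-special group has a nonzero fixed vector on $V$ (equivalently, that the field generated by an abelian normal subgroup cannot carry the reducible minimal polynomial $X^p-1$). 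By contrast, the reduction to $E$ extra-special of order $p^3$, the faithfulness of $H$ on $E/Z$, and the closing appeal to Lemma~\ref{lemmaDolfi}(i) are all routine once the irreducibility is secured.
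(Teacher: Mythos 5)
Your proof is correct, and its endgame coincides with the paper's: both reduce to the observation that $E/Z$ has order $e^2\in\{9,49\}$, the square of a Mersenne prime, and then invoke Lemma~\ref{lemmaDolfi}(i) to contradict the odd order of the acting group. Where you genuinely diverge is in how irreducibility of the action on $E/Z$ is secured. The paper gets it in one line from \cite[Corollary 1.10]{manz-wolf}: $E/Z$ is a direct sum of chief factors of $G$, each an irreducible symplectic module of \emph{even} dimension, so for $e$ prime the $2$-dimensional space $E/Z$ is itself a chief factor --- the even-dimensionality is exactly what forbids the $1$-dimensional invariant subspaces that you instead exclude by hand. Your substitute is a self-contained dichotomy: a $G$-invariant line in $E/Z$ would pull back to an abelian normal subgroup $M$ with $Z\subsetneq M\subsetneq E$; quasi-primitivity makes the image of the enveloping algebra of $M$ in $\End(V)$ a field, so $M$ acts fixed point freely on $V$, whereas a noncentral $u\in E$ of order $p$ has eigenvalue $1$ on $V$ (its trace on each faithful absolutely irreducible constituent of $V_E$ vanishes, forcing all $p$-th roots of unity to occur as eigenvalues). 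That argument is sound, including in the modular setting; the one hypothesis you use tacitly is the coprimality $\Char\,\GF(q)\neq p$, which does hold, since otherwise the nontrivial normal $p$-group $E$ would lie in $\bO_r(G)$ for $r=\Char\,\GF(q)$, and $\bO_r(G)=1$ for a faithful irreducible module in characteristic $r$. You also explicitly verify faithfulness, $\bC_G(E/Z)=\bF(G)$, via $Z=[E,E]$, a point the paper leaves implicit in passing to $G/\bC_G(\bF(G)/T)$. Net comparison: the paper's proof is three lines modulo Manz--Wolf, while yours is longer but elementary, and it makes visible exactly where quasi-primitivity and the extraspecial structure of $E$ enter.
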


\begin{proof}
Assume by contradiction that $e=3$ or $e=7$. We will keep the
notation as in Lemma~\ref{lem2}. We have $\bF(G)/T \cong E/Z$ is a
direct sum of the chief factors of $G$ and each factor, as an
irreducible symplectic $G$-module, has even dimension (see
\cite[Corollary 1.10]{manz-wolf}). It follows that
$G/\bC_G(\bF(G)/T)$ is a solvable irreducible subgroup of $\GL(2,3)$
or $\GL(2,7)$. Using Lemma~\ref{lemmaDolfi}, we deduce that
$|G/\bC_G(\bF(G)/T)|$ is even, a contradiction.
\end{proof}

We need to use \emph{semi-linear groups} and, for the reader's
convenience, we recall the definition here.

Let $V$ be the Galois field $GF(q^m)$ for a prime power $q$. For
fixed $a\in V^{\#}:=V\setminus \{0\}$, $w\in V$ and $\sigma \in
\mathrm{Gal}(GF(q^m)/GF(q))$, one can define the mapping
\begin{center}
$T: V\rightarrow V$ by $T(x)=ax^{\sigma}+w$,
\end{center}
which turns out to be a permutation on $V$. Note that $T$ is trivial
if and only if $a=1$, $\sigma =1$ and $w=0$. The affine semi-linear
group of $V$ is defined to be the group of all those permutations,
and the semi-linear group is a subgroup of the affine semi-linear
group:
\[\Gamma(V)=\Gamma(q^m)=\{x\rightarrow ax^{\sigma}\ |\ a\in
GF(q^m)^{\#}, \sigma \in \mathrm{Gal}(GF(q^m)/GF(q))\}.\]

To end this section, we record a result of S. Seager \cite[Theorem
1]{seager}. We restate it here in the form suitable for our
purposes.

\begin{theorem} \label{thmseager}
Let $p$ be a prime and $G$ a finite solvable group acting faithfully
and irreducibly on the finite $\GF(p)$G-module $V$.  Write $n(G,V)$
for the number of orbits of $G$ on $V$. Then by a classical result
(see \cite[Theorem 6]{seager}) we know that $G$ is isomorphic to a
subgroup of $T\wr S$, where $T$ is a solvable irreducible subgroup
of $\GL(m, p)$ and $S$ is a solvable transitive subgroup of $\Sy_n$
for some integers $m$, $n$; moreover,  $\dim V=mn$. Then the
following hold.
\begin{itemize}
\item[(i)] If $T$ (as a linear group) is isomorphic to a subgroup of
$\Gamma(p^m)$, then
\[n \leq\   0.157 \log_3((n(G,V) + 1.43)/24^{1/3}).\]

\item[(ii)] If $T$ (as a linear group) is not isomorphic to a subgroup of
$\Gamma(p^m)$, then
\[|V|=p^{mn} < ((n(G,V) + 1.43)/24^{1/3})^c\]
for $c = 36.435663$.
\end{itemize}
\end{theorem}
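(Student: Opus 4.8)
Since this result merely restates Seager's theorem, my plan is to follow the argument that produces orbit lower bounds of this shape, organised around the imprimitivity decomposition already recorded in the statement. The classical reduction (\cite[Theorem 6]{seager}) embeds $G$ into $T\wr S$ acting imprimitively on $V=W^n$, where $W=\GF(p)^m$ is the block, $T\leq\GL(m,p)$ is a solvable irreducible and quasi-primitive linear group on $W$, and $S\leq\Sy_n$ is solvable transitive; I would take this as the starting point. The key monotonicity is that $G\leq T\wr S$ forces every $(T\wr S)$-orbit to be a union of $G$-orbits, so $n(G,V)\geq n(T\wr S,V)$, and it therefore suffices to bound the number of orbits of the wreath product itself from below.

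The heart of the count is the Cauchy--Frobenius formula applied to the wreath product. Writing $t:=n(T,W)$ for the number of orbits of the primitive component and $c(s)$ for the number of cycles of $s\in S$ on the $n$ blocks, one obtains $n(T\wr S,V)=\frac{1}{|S|}\sum_{s\in S}t^{\,c(s)}\geq t^{\,n}/|S|$, the identity contributing the dominant term $t^{\,n}$. To control $|S|$ I would invoke Dixon's bound that a solvable permutation group of degree $n$ has order at most $24^{(n-1)/3}$ (Lemma~\ref{lem1} is the analogous statement for odd order); this is exactly where the constant $24^{1/3}$, and the normalisation $(n(G,V)+1.43)/24^{1/3}$ appearing in the statement, come from. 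Combining the two estimates gives $n(G,V)\geq 24^{1/3}\,(t\,/\,24^{1/3})^{\,n}$, which already yields a clean exponential-in-$n$ lower bound as soon as $t$ exceeds $24^{1/3}$.

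What remains, and what splits the conclusion into (i) and (ii), is a lower bound on $t=n(T,W)$ for the quasi-primitive component; here I would use the detailed structure of $T$ from Lemma~\ref{lem2} together with the Manz--Wolf order bounds (Lemma~\ref{lemma-manz-wolf}). When $T$ is \emph{not} semi-linear it cannot be too transitive on $W$: the structure theorem and the classification of primitive solvable linear groups force $t$ to grow like a fixed power of $|W|$, and feeding a bound of the form $t\geq 24^{1/3}|W|^{1/c}$ into the wreath estimate recovers the bound on $|V|=|W|^{n}$ in (ii). When $T$ \emph{is} semi-linear it may have as few as two orbits on $W$, so the order-based estimate degenerates and only the block number $n$ can be controlled; there one argues directly with the semi-linear and Galois structure of $\Gamma(p^m)\wr S$, using that support patterns and field-orbit data are $S$-invariants, to obtain the slow exponential bound underlying (i). I expect the main obstacle to be precisely this semi-linear analysis, together with the bookkeeping needed to make the constant $c=36.44\ldots$ in (ii) explicit: the non-semi-linear estimate must be optimised over the finitely many small exceptional primitive configurations (such as those ruled out in Lemmas~\ref{lemmaDolfi} and \ref{lem3}), and it is these exceptional cases, rather than the generic count, that pin down the numerical constants.
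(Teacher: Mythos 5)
The paper does not actually prove this statement: its ``proof'' is a one-line citation, deriving the theorem from Seager's paper \cite[Section 6]{seager} by translating the rank of a primitive solvable permutation group (the affine group $VG$ with point stabilizer $G$) into the orbit count $n(G,V)$ of the linear group. So the only content to check in the paper is that translation, which your opening paragraph handles correctly. Your proposal instead attempts to reconstruct Seager's argument, and its skeleton is fine as far as it goes: $n(G,V)\geq n(T\wr S,V)$ has the right direction, the Burnside count $n(T\wr S,V)=\frac{1}{|S|}\sum_{s\in S}t^{c(s)}\geq t^{\,n}/|S|$ is exact for the imprimitive action, and Dixon's bound $|S|\leq 24^{(n-1)/3}$ for solvable permutation groups gives $n(G,V)\geq 24^{1/3}(t/24^{1/3})^n$. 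But the two steps carrying all the content are asserted, not proved: the lower bound of the form $t\geq 24^{1/3}|W|^{1/c}$ for non-semi-linear quasi-primitive $T$ is precisely the technical core of Seager's paper, and it cannot be extracted from the auxiliary results you point to --- Lemmas~\ref{lemmaDolfi}, \ref{lemma-manz-wolf} and \ref{lem3} all require \emph{odd order}, whereas $T$ here is merely solvable; nothing in your sketch pins down $c=36.435663$.

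More seriously, your plan for part (i) provably cannot work. When $T\leq\Gamma(p^m)$ one can have $t=2$ (e.g.\ $T=\Gamma(4)$ acting on $\GF(4)$), and then every invariant of the kind you propose (``support patterns and field-orbit data'') reduces to the set-orbits of $S$ on the $n$ blocks; indeed the \emph{full} wreath product $T\wr S$ then has exactly as many orbits on $V$ as $S$ has set-orbits on $n$ points. Taking $S$ to be the iterated wreath product of copies of $Z_2$ (a Sylow $2$-subgroup of $\Sy_{2^k}$, solvable and transitive of degree $n=2^k$), the set-orbit count $f_k$ satisfies $f_0=2$ and $f_k=f_{k-1}(f_{k-1}+1)/2$, so $f_k\approx 3^{0.27n}$; for $n=32$ this gives $f_5=26796$ orbits, while the inequality in (i) would demand $n\leq 0.157\log_3((26796+1.43)/24^{1/3})\approx 1.3$. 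Hence no lower bound on orbits of the ambient wreath product, and no counting of $S$-invariants, can yield (i) as stated; the example also shows that the two alternatives must in effect be read in conjunction (in it, (ii) holds comfortably since $|V|=2^{64}$ is far below $((r+1.43)/24^{1/3})^{36.435663}$), so Seager's actual argument for the semi-linear case is entangled with the degree bound and uses more about $G$ than its embedding in $\Gamma(p^m)\wr S$. In short, your proposal correctly frames the reduction but leaves both decisive estimates unproved, and the mechanism you propose for (i) is demonstrably insufficient --- to prove this theorem one really does have to import Seager's own analysis, which is exactly what the paper does by citation.
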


\begin{proof}
This follows from \cite[Section 6]{seager} by changing the language
of primitive permutation groups to the language of linear groups.
\end{proof}

\section{The asymptotic bound}

In this section we prove Theorem~\ref{theorem11}. We start with the
asymptotic part of Theorem~\ref{theorem2}. We thank the referee for
pointing out this to us.

\begin{theorem}\label{theorem-111}
Let $G$ be an odd-order group acting on an abelian $2$-group $A$.
Then $k(GA)\geq |A|^\alpha$ for some universal constant $\alpha>0$.
\end{theorem}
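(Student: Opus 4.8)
The plan is to use coprimality to replace the given action by a completely reducible one of the same order, and then to invoke a general class-number bound for Frattini-free solvable groups. Since $|G|$ is odd, the Feit--Thompson theorem guarantees that $G$ is solvable, and since $|A|$ is a power of $2$ the action of $G$ on $A$ is coprime; in particular $H:=GA$ is solvable. My target is an inequality of the shape $k(H)\ge |A|^{c}$ for a universal $c>0$, from which the statement follows with $\alpha=c$.

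The first and crucial step is to replace $A$ by a semisimple module of the same order without changing the class number. Let $\tilde A$ denote the direct sum of the $G$-composition factors of $A$; since the action is coprime on a $2$-group, $\tilde A$ is an elementary abelian $2$-group with $|\tilde A|=|A|$ on which $G$ acts completely reducibly. The Hartley--Turull lemma \cite[Lemma 2.6.2]{hartley-turull} on isomorphic coprime actions is exactly the tool needed to compare the two semidirect products: it should yield $k(GA)=k(G\tilde A)$, since the two actions have the same composition factors and hence, propagated level by level through the centralizers $\bC_G(g)$, the same orbit data that govern the class number of the semidirect product. Making this equality precise is the step I expect to be the most delicate, because it requires feeding the coprime-equivalence of the two actions through the inductive machinery of the Hartley--Turull lemma, rather than merely matching the fixed-point counts $|\bC_A(g)|=|\bC_{\tilde A}(g)|$ that an orbit-counting argument would furnish.

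With a completely reducible action in hand, I would realize $\tilde A$ inside a Frattini-free group. Writing $\tilde A=\bigoplus_i M_i$ as a sum of irreducible $G$-constituents, each $M_i$ is a minimal normal subgroup of $G\tilde A$ and is complemented there (by $G$ together with the remaining constituents), so $M_i\cap\Phi(G\tilde A)=1$; by complete reducibility no nonzero $G$-submodule of $\tilde A$ can lie in $\Phi(G\tilde A)$, whence $\tilde A\cap\Phi(G\tilde A)=1$. Thus $\tilde A$ embeds into the Frattini-free solvable group $L:=G\tilde A/\Phi(G\tilde A)$, giving $|L|\ge|\tilde A|=|A|$. Finally I would apply Theorem~B of \cite{Keller}, which bounds the class number of a Frattini-free solvable group below by a positive power of its order, to conclude
\[
k(GA)=k(G\tilde A)\ge k(L)\ge |L|^{c}\ge |A|^{c}.
\]
No case division on the size of $|G|$ is needed along this route; the entire content lies in the two reductions above, with the Hartley--Turull reduction being the crux. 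The constant $c$ produced by \cite{Keller} is not explicit, which is precisely why this approach yields a non-explicit $\alpha$.
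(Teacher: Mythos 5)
Your proposal is correct and follows essentially the same route as the paper: Feit--Thompson for solvability, the Hartley--Turull lemma to replace $A$ by an elementary abelian module of the same order --- where your ``delicate'' step $k(GA)=k(G\tilde A)$ is made immediate by the standard formula $k(GA)=\sum_{i} k(\bC_G(a_i))$ over orbit representatives (cited in the paper from \cite[Proposition 3.1]{Smith07}), since the permutation isomorphism matches orbits and centralizers exactly --- and finally Keller's Theorem~B for Frattini-free solvable groups. The only (valid) cosmetic difference is at the end: the paper first factors out $\bC_G(A)$ so that the semidirect product itself has trivial Frattini subgroup, whereas you skip the faithfulness reduction and instead pass to the quotient $L=G\tilde A/\Phi(G\tilde A)$, correctly observing that $\tilde A\cap\Phi(G\tilde A)=1$ so that $|L|\geq|A|$.
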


\begin{proof}
 First we observe that $G$ is solvable by Feit-Thompson's theorem \cite{FT63}.

 Suppose that the action of $G$ on $A$ is not faithful. Let $N:=\bC_G(A)$.
Then $1<N\unlhd GA$ and $G/N$ acts faithfully on $A$. We then have
$k(GA)\geq k((GA)/N)=k((G/N)A)$. So we may assume that $G$ is
faithful on $A$. It follows that $A$ is the Fitting subgroup of
$GA$.

A well-known formula for $k(GA)$ states that
\[ k(GA)\ =\ \sum\limits_{i=1}^{n(G,A)}k(\bC_G(a_i)),\]
where $n(G,A)$ is the number of orbits of $G$ on $A$, and the
$a_i\in A$ ($1\leq i \leq n(G,A)$) are representatives of the orbits
of $G$ on $A$, see \cite[Proposition 3.1]{Smith07} for instance.

Now by the Hartley-Turull Lemma \cite[Lemma 2.6.2]{hartley-turull},
there exists an elementary abelian group $B$ such that the actions
of $G$ on $A$ and $B$ are permutation isomorphic. Now $|A|=|B|$ and
$n(G,A)=n(G, B)$, and if $\phi: A\to B$ is the corresponding
permutation isomorphism, then $\bC_G(a)=\bC_G(\phi(a))$ for every
$a\in A$. Hence by the $k(GA)$-formula above it follows that
$k(GA)=k(GB)$, and so by replacing  $A$ by $B$ if necessary we may
assume that $A$ is elementary abelian. Therefore the Frattini
subgroup of $GA$ is trivial.

We have shown that $GA$ is a solvable group with trivial Frattini
subgroup. Now using \cite{Keller}, we deduce that there exists a
universal constance $\alpha>0$ such that
\[k(GA)\geq |GA|^{\alpha}\geq |A|^{\alpha},\] as
desired.
\end{proof}

\begin{proof}[Proof of Theorem \ref{theorem11}] As explained in the introduction, we have
\[
|\Irr_{2'}(G)|=k(\bN_G(P)/P'),
\] by Malle-Sp\"{a}th's theorem. By the Schur-Zassenhaus theorem, $\bN_G(P)/P'$
is a split extension of the abelian $2$-group $P/P'$ and the
odd-order group $\bN_G(P)/P$. Therefore Theorem~\ref{theorem11}
follows by Theorem~\ref{theorem-111}.
\end{proof}


\section{Toward the explicit bound}

We have to work much harder to obtain an (good) explicit bound for
$|\Irr_{2'}(G)|$. The main result of this section is
Theorem~\ref{thm3}, which will be used in a key step in the proof of
Theorem~\ref{theorem2} in the final section.

Recall that an orbit $x^G$ of an action of a group $G$ on a set $X$
is said to be \emph{regular} if $|x^G|=|G|$, or equivalently,
$\bC_G(x)$ is trivial.

It was proved in \cite{Yang1} that if $V$ is a finite, faithful,
irreducible, and quasi-primitive $G$-module for an odd-order group
$G$ with $\bF(G)$ nonabelian (i.e. the action of $G$ on $V$ is not
semi-linear), then $G$ has at least $212$ regular orbits on $V$.
(The bound $212$ is indeed sharp.) In the following important
result, we improve this to a bound in terms of $\dim(V)$.

\begin{theorem} \label{thm2}
Let $G$ be a (solvable) group of odd order. Let $V$ be a finite,
faithful, irreducible, and quasi-primitive $G$-module. Assume that
the action of $G$ on $V$ is not semi-linear. Then $G$ has at least
$\max (212, k+1)$ regular orbits on $V$, where $k= \dim(V)$.
\end{theorem}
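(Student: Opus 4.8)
The plan is to prove this refinement of the $212$-regular-orbit result by leveraging the structural machinery of Lemma~\ref{lem2} together with the quantitative orbit bounds of Theorem~\ref{thmseager}. The starting point is that the bound $\max(212,k+1)$ already agrees with the known sharp bound of $212$ whenever $k=\dim(V)\leq 211$; so the real content is to show that when $\dim(V)$ is large, the number of regular orbits grows at least linearly in $\dim(V)$. Concretely, I would fix notation as in Lemma~\ref{lem2}, writing $Z\subseteq T\subseteq F=\bF(G)\subseteq A\subseteq G$, and recall that non-semi-linearity is exactly the statement that $F=\bF(G)$ is nonabelian, equivalently $E/Z\neq 1$ and $e>1$ in the notation $|F:T|=e^2$.

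\textbf{Reducing regular orbits to an orbit count.} The key observation I would exploit is that the number of regular orbits of $G$ on $V$ can be estimated from below using the total orbit count $n(G,V)$, because the non-regular orbits are few and controlled. Writing $r(G,V)$ for the number of regular orbits, one has $|V| = \sum |x^G|$, and the non-regular orbits all have $\bC_G(x)\neq 1$, so they are covered by the fixed-point sets $\bC_V(g)$ for $g\in G$ of prime order. A standard fixed-point-counting argument (of the Manz--Wolf type underlying Lemma~\ref{lemma-manz-wolf}) bounds $\sum_{x\ \mathrm{non\text{-}reg}}|x^G|$ in terms of $\sum_{g} |\bC_V(g)|$, each summand of which has dimension at most $\dim(V)/p_{\min}$, and this in turn lets me convert a lower bound on $n(G,V)$ (or directly on $|V|$) into a lower bound on $r(G,V)$. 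The aim is to show $r(G,V)\geq \dim(V)+1=k+1$ once $\dim(V)$ exceeds the threshold where $212$ stops being the binding constraint.

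\textbf{Applying Seager's dichotomy.} Because $G$ is solvable of odd order acting faithfully and irreducibly, I would feed the situation into Theorem~\ref{thmseager}, viewing $G\leq T\wr S$ with $\dim V=mn$. In the quasi-primitive non-semi-linear case the relevant component $T$ falls in case (ii), i.e. $T$ is \emph{not} isomorphic to a subgroup of $\Gamma(p^m)$, so $|V|=p^{mn}<((n(G,V)+1.43)/24^{1/3})^c$ with $c=36.435663$. Solving this inequality for $n(G,V)$ gives a lower bound on the total number of orbits that is polynomial in $|V|^{1/c}$, hence (taking logarithms) grows at least linearly in $\dim(V)=mn$ for $p\geq 3$. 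Combining this with the fixed-point estimate from the previous step yields $r(G,V)\geq k+1$ for all sufficiently large $k$, and one checks by the explicit constants that ``sufficiently large'' means precisely the range in which $k+1>212$. For the remaining small values of $\dim(V)$, the bound $r(G,V)\geq 212$ is already supplied by the cited result of \cite{Yang1}.

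\textbf{Main obstacle.} The hard part will be handling the transition region and the low-dimensional exceptional configurations where Seager's inequality is too weak to force $r(G,V)\geq k+1$ directly. Here the coarse count from Theorem~\ref{thmseager}(ii) competes uncomfortably with the $212$ floor, and one must rule out small quasi-primitive actions — for instance those involving the extra-special factors of $E$ of small prime exponent — by the finer arithmetic constraints of Lemmas~\ref{lemmaDolfi} and \ref{lem3} (which exclude $e=3,7$ and pin down the $|V|=25,81$ cases). I expect the bulk of the technical work, and the genuine risk of a gap, to lie in verifying that across this finite but delicate list of small $(p,m,n,e)$ the regular-orbit count never dips below the required $\max(212,k+1)$, likely by a combination of the fixed-point bound above and direct case analysis.
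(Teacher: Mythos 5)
Your proposal contains a genuine gap at its quantitative core: the planned conversion from the total orbit count $n(G,V)$ to the regular orbit count does not work. Seager's bound in Theorem~\ref{thmseager}(ii) yields only $n(G,V)\gtrsim 24^{1/3}|V|^{1/c}-1.43$ with $c=36.435663$, i.e.\ a lower bound of order $|V|^{0.028}$. But the non-regular points lie in $\bigcup_{g\in G^{\#}}\bC_V(g)$, a set of size up to $\frac{|G|-1}{2}|V|^{4/9}$ (by the fixed-point bound \cite[Lemma 1.3]{AE2} you invoke), and since non-regular orbits can be very small, their \emph{number} can only be bounded by this quantity. As $4/9\gg 1/c$, the subtraction $n(G,V)-\frac{|G|-1}{2}|V|^{4/9}$ is hopelessly negative unless you first have a strong \emph{upper} bound on $|G|$ --- and producing that upper bound is precisely the missing heart of the argument. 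The paper's actual proof supplies it from the structure chain $Z\subseteq T\subseteq F\subseteq A\subseteq G$ of Lemma~\ref{lem2}: a minimal counterexample with at most $k$ regular orbits forces $|G|>|V|^{5/9}$ (via the $4/9$ fixed-point estimate and $|V|^{4/9}>2k$, using $e\geq 5$ from Lemma~\ref{lem3}), while on the other side $|G|=|G/A|\,|A/F|\,|F/T|\,|T|<\log_2|W|\cdot e^4|W|/2$, where $|A/F|\leq e^2/2$ comes from Espuelas' regular-orbit theorem \cite[Theorem A]{AE2} on the symplectic module $F/T$, $|G/A|\mid\dim W$, and $|T|<|W|$. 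These two bounds collide except for $e=5$, $|W|\leq 70$ and $e=9$, $|W|\leq 10$, which are eliminated by hand using Lemma~\ref{lemmaDolfi}. Seager's theorem plays no role in the paper's proof of this statement; it is used later, in Step 5 of the proof of the explicit bound, and then in its part (i), for modules induced from \emph{semi-linear} actions.

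Two secondary problems: first, your claim that the quasi-primitive non-semi-linear case automatically lands in case (ii) of Theorem~\ref{thmseager} is unjustified --- non-semi-linearity of $G$ on $V$ does not force the block component $T$ of the wreath embedding $G\leq T\wr S$ to lie outside $\Gamma(p^m)$, so the dichotomy cannot be resolved the way you assert. Second, your remark that Seager's bound ``grows at least linearly in $\dim(V)$ for $p\geq 3$'' quietly excludes $p=2$, yet characteristic $2$ is allowed in the statement and is exactly the case needed in the application (the paper's case analysis explicitly entertains $|W|=4$). Your opening reduction (invoke \cite{Yang1} for $k\leq 211$) and your use of Lemmas~\ref{lemmaDolfi} and \ref{lem3} for small configurations do match the paper, but without the upper bound on $|G|$ extracted from Lemma~\ref{lem2} the proof cannot be completed along the lines you sketch.
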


\begin{proof}
By \cite[Theorem 1.1]{Yang1}, we may assume that $\dim(V)=k \geq
212$. We will follow some ideas in the proof of \cite[Theorem
2.2]{SD}. Assume  that $G$ is a counterexample of minimal order.
Then $G$ has at most $k$ regular orbits on $V$.

Let $G^\#$ be the set of nonidentity elements of $G$. Any $v \in V$
not contained in $\bigcup_{g\in G^\#}\bC_V(g)$ must necessarily lie
in a regular $G$-orbit, each of which has size exactly $|G|$. Since
$G$ has at most $k$ regular orbits, we deduce that
\[ |V| \leq k \cdot |G|+\left | \bigcup_{g\in G^\#}\bC_V(g) \right |. \]

By \cite[Lemma 1.3]{AE2}, we have
\[
\dim(\bC_V(g))\leq \frac{4}{9}\dim(V) \] whenever $g \neq 1$, and it
follows that \[|\bC_V(g)| \leq |V|^{\frac 4 9}\] for all $g \neq 1$.

Let $\mathcal{P}$ be the collection of all subgroups of $G$ of prime
order. Then, as $|G|$ is odd, we have $|\mathcal{P}| \leq
(|G|-1)/2$. It follows that
\[ \left |\bigcup_{g\in G^\#}\bC_V(g) \right |= \left |\bigcup_{S\in
\mathcal{P}}\bC_V(S) \right | \leq \frac {|G|-1}{2} \cdot |V|^{\frac
4 9}. \]
We then get \[|V| \leq k \cdot |G|+\frac {|G|-1}{2} \cdot |V|^{\frac
4 9},\] and thus \[|G| \geq \frac{2|V|+|V|^{\frac 4
9}}{2k+|V|^{\frac 4 9}}
> \frac{2|V|}{2k+|V|^{\frac 4 9}}.\]

We now adopt the notation in Lemma~\ref{lem2}.

From Lemma~\ref{lem2} we know that $T$ acts fixed point freely on
$W$ and $|T| \mid (|W|-1)$, thus $|T| < |W|$. Moreover $|T|$ is odd
and $T \neq 1$, we also have $|T| \geq 3$ and therefore $|W| \geq
4$. Lemma~\ref{lem2}(6) then implies that $|V|\geq 4^e$. Recall that
$\dim V=k$, so $|V|\geq 2^k$. Therefore we have
\[
|V|\geq \max \{ 4^e,2^k\}.
\]
Note that $e$ is a positive odd integer bigger than 1. In fact
$e\geq 5$ by Lemma~\ref{lem3}. This and the fact that $e \mid k$
imply that $\max \{ 4^e,2^k\}^{4/9}\geq 2k$. So we have
\[
|V|^{\frac4 9}>2k.
\]
Therefore \[|G|
> |V|^{\frac 5 9}.\] Since $|V|=|W|^{be}$, we now have \[|G| >
|W|^{\frac{5be} {9}}.\]

Recall from Lemma~\ref{lem2}(iii) that $F/T$ is a faithful
completely reducible symplectic $A/F$-module. Using \cite[Theorem
A]{AE2}, we deduce that $F/T$ contains at least two regular
$A/F$-orbits. Note that $|F/T|=e^2$. Hence
\[
|A/F|\leq |F/T|/2=e^2/2.
\]

From Lemma~\ref{lem2}(vi) that $|G/A| \mid \dim W$, we have $|G/A|
\leq \log_2 |W|$. Recall that $|T|<|W|$. We now have
\begin{align*}|G|=&|G/A||A/F||F/T||T|\\
 <& \log_2 |W| \cdot e^4/2 \cdot |W|. \end{align*}
Therefore
\[|W|^{5e/9} < \log_2 |W| \cdot e^4 \cdot |W|/2.\]

Recall from Lemma~\ref{lem3} that $e \neq 3$ and $e \neq 7$. Let
$p_1, \dots, p_l$ be all the different prime divisors of $e$. Then
$p_1 \dots p_l \mid |Z| \mid |T| \mid (|W|-1)$. Take these into
consideration and solve the above inequality, we come up with only
two cases: $e=5$, $|W| \leq 70$ and $e=9$, $|W| \leq 10$.

\begin{enumerate}

\item Assume $e=5$ and $|W| \leq 70$. Thus $|F/T|=e^2=25$ and
$|A/F|=3$ by Lemma~\ref{lemmaDolfi}(ii). Moreover, $|G/A|$ is odd
and $G/A \lesssim \Aut(Z)$. Since $|W|$ is a prime power and $5 \mid
|T| \mid (|W|-1)$, we may assume $|T|=5$ or $15$.

\begin{enumerate}
\item Assume $|T|=5$. Thus $Z \cong Z_5$, $G/A \lesssim \Aut(Z) \cong Z_4$
and hence $|G/A|=1$ as $|G/A|$ is odd. Therefore,
\[|G|=|G/A||A/F||F/T||T| = 3 \cdot 5^3.\]
On the other hand, since $|T| \mid (|W|-1)$, we have $|W| \geq 11$.
Also, since $\dim(V) \geq 5 \cdot \dim(W) $, $|V| \geq 11^5$. But
then we have $|G| \leq |V|^{5/9}$, a contradiction.

\item Assume $|T|=15=5 \cdot 3$. Thus $Z \lesssim Z_3 \times Z_5$,
$G/A \lesssim \Aut(Z) \cong Z_2 \times Z_4$ and again $|G/A|=1$. It
follows that
\[|G|=|G/A||A/F||F/T||T| = 3 \cdot 5^3 \cdot 3.\]
Since $|T| \mid (|W|-1)$, we have $|W| \geq 16$. Moreover, since
$\dim(V) \geq 5 \cdot \dim(W) $, we get $|V| \geq 16^5$. But then we
have $|G| \leq |V|^{5/9}$, a contradiction again.





\end{enumerate}

\item Assume $e=9$ and $|W| \leq 10$. Then $|F/T|=e^2=81$. Now the action of $A/F$ on $F/T$ is
irreducible (since otherwise $|A/F|$ is even) and then $|A/F| \leq
5$ by Lemma~\ref{lemmaDolfi}. Since $|W|$ is a prime power and $3
\mid |T| \mid (|W|-1)$, we have $|W|=4$ or $7$, $|T|=3$ and $T = Z
\cong Z_3$. It follows that $G/A \lesssim \Aut(Z) \cong Z_2$ and
hence $G/A=1$.

Recall that $\dim(V) \geq 212$, we know that $|W|=7$, or $|W|=4$ and
$b \geq 2$. We have $|G|=|G/A||A/F||F/T||T| \leq 5 \cdot 3^5 \leq
|V|^{5/9}$, a contradiction.
\end{enumerate}
\end{proof}


If $G$ is a permutation group on a set $\Omega$, there will be an
induced action of $G$ on the power set $\mathscr{P}(\Omega)$. The
$G$-orbit of a subset $\Delta \subseteq \Omega$ is \emph{regular} if
$\Stab_G(\Delta)$ is trivial. If we have $|\Delta|\neq |\Omega|/2$
in addition, the orbit is called \emph{strongly regular}.

\begin{lemma}  \label{lemoddperm}
Let $G$ be an odd-order transitive primitive permutation group on
$\Omega$ with $|\Omega|=n$. Then $G$ has at least $\lceil \frac n
{25} \rceil$ strongly regular orbits on the power set
$\mathscr{P}(\Omega)$ of $\Omega$.
\end{lemma}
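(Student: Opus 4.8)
The plan is to combine the structure theory of odd-order primitive groups with a fixed-point count on the power set, bounding the number of \emph{non}-regular subsets by a union bound over subgroups of prime order. First I would note that $G$ is solvable by Feit--Thompson, and invoke the structure theorem for solvable primitive permutation groups: $G$ is of affine type, so we may identify $\Omega$ with a vector space $V=\FP^d$ and write $G=V\rtimes G_0$, where the socle $V$ is regular, $n=|V|=p^d$, and $G_0\le\GL(d,p)$ is an odd-order irreducible linear group. Here $p$ is odd, since $p^d=n$ divides the odd number $|G|$. The first dividend is that $n=p^d$ is then odd, so no subset of $\Omega$ has size $n/2$; hence \emph{every} regular orbit on $\mathscr{P}(\Omega)$ is automatically strongly regular, and it suffices to bound the number $R$ of regular orbits from below.

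I would count $R$ by complementation. Since every regular orbit has size exactly $|G|$, we have $R=(2^n-N)/|G|$, where $N$ is the number of subsets with nontrivial setwise stabilizer. By Cauchy's theorem a subset has nontrivial stabilizer precisely when it is fixed by some subgroup of prime order; and a subset fixed by $S$ is exactly a union of $S$-orbits on $\Omega$, of which there are $c(S)$, say. Thus, letting $\mathcal{P}$ be the set of prime-order subgroups of $G$, the union bound gives $N\le\sum_{S\in\mathcal{P}}2^{c(S)}$, while $|\mathcal{P}|\le(|G|-1)/2$ because $|G|$ is odd. To control $c(S)$ I would bound the fixed points of a nonidentity affine map $x\mapsto Mx+v$: its fixed set is empty or a coset of $\ker(M-I)=\bC_V(M)$, whose dimension is at most $d-1$ as $M\ne I$; so any $1\ne g$ fixes at most $p^{d-1}=n/p\le n/3$ points, and since every nontrivial $S$-orbit has size at least $3$ this yields $c(S)\le(n+2f)/3\le\tfrac{5n}{9}$. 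Finally, Lemma~\ref{lemma-manz-wolf} applied to $G_0$ acting on the odd-characteristic module $V$ gives $|G_0|\le n^{1.5}/24^{1/3}$, whence $|G|=n\,|G_0|\le n^{2.5}/24^{1/3}$.

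Assembling these estimates, $R\ge\frac{2^n}{|G|}-\tfrac12\,2^{5n/9}\ge\frac{24^{1/3}\,2^n}{n^{2.5}}-\tfrac12\,2^{5n/9}$. Since $2^{4n/9}$ eventually dominates every power of $n$, the term $2^n/n^{2.5}$ swamps both $2^{5n/9}$ and $n/25$, so $R>\lceil n/25\rceil$ for all sufficiently large $n$; this handles the asymptotic regime cleanly.

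The hard part will be the explicit constant $25$ and the corresponding small-degree cases, where the crude estimates above are too lossy (the gap between $|G|\le n^{2.5}/24^{1/3}$ and the true order is typically enormous, and $c(S)$ is usually far below $5n/9$). For $n\le 25$ the target is merely $\lceil n/25\rceil=1$, so one only needs to exhibit a single regular—hence strongly regular—orbit, which the same counting delivers once $|G|$ and $\max_S c(S)$ are estimated honestly for the finitely many odd-order affine primitive groups of small degree. For intermediate $n$ I would sharpen the fixed-point bound, for instance by replacing $n/p$ with the $\tfrac49$-bound $\dim\bC_V(g)\le\tfrac49\dim V$ of \cite[Lemma 1.3]{AE2} after reducing $G_0$ to a quasi-primitive action, or by feeding Seager's orbit estimate (Theorem~\ref{thmseager}) into the bound on $|G|$. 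I expect this calibration of the constant, rather than the asymptotics, to be the delicate part of the proof.
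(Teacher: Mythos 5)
Your proposal follows essentially the same route as the paper's proof of Lemma~\ref{lemoddperm}: the affine structure of a solvable (Feit--Thompson) primitive group with $n=p^t$ odd, a union bound over prime-order subgroups (at most $(|G|-1)/2$ of them) with the cycle-count estimate $c(S)\le \frac{(p+o-1)n}{op}\le\frac{5n}{9}$ coming from $o,p\ge 3$, and the Manz--Wolf bound $|G|\le n^{2.5}/24^{1/3}$ of Lemma~\ref{lemma-manz-wolf}. The only divergence is at the finish: the paper checks the resulting inequality directly for all $n\ge 25$ (so no sharpening via the $4/9$-bound or Seager is needed), and for $n\le 24$, where $\lfloor n/25\rfloor=0$, it simply cites Gluck's theorem \cite[Theorem 5.6]{manz-wolf} that such a group always has at least one strongly regular orbit on $\mathscr{P}(\Omega)$ --- which disposes of the small-degree case analysis you anticipated as the delicate part.
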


\begin{proof}
We recall the structure of a solvable primitive permutation group.
Let $V$ be a minimal normal subgroup of $G$ and let $S$ denote a
point stabilizer. We know that $G=VS$, $S \cap V=1$, and
$\bC_G(V)=V$. Also $n=|\Omega|=|V|$ is a prime power, and we set
$n=p^t$ where $p$ is a prime. For $g\in G$, we denote by $n(g)$ the
number of cycles of $g$ on $\Omega$, by $o(g)$ the smallest prime
divisor of the order of $g$, and by $s(g)$ the number of fixed
points of $g$. We set $o$ to be the smallest order of all the
nontrivial elements in $G$.

For a subset $X \subseteq G$, we consider the following set
\[S(X)=\{(g,\Gamma) \ |\ g \in X, \Gamma \subseteq \Omega, g \in
\Stab_G(\Gamma)\}.\] It is easy to see that in order to estimate
$S(G\backslash \{1\})$, we only need to count elements of prime
order. Also, assume $g_1$ is of prime order and let $g_2 \in \langle
g_1 \rangle$, then the subsets of $\Omega$ stabilized by $g_1$ and
$g_2$ are exactly the same. Note that $g \in G$ stabilizes exactly
$2^{n(g)}$ subsets of $\Omega$.

Now we have
\[|S(G \backslash \{1\})| \leq \frac {|G|} {o-1} \cdot 2^{\lfloor (p+o-1)n/(o \cdot p) \rfloor}.\]
Therefore, it is sufficient to show that
\[\frac {|G|} {o-1} \cdot 2^{\lfloor (p+o-1)n/(o \cdot p) \rfloor} <
2^{n}-\left\lfloor {\frac n {25}} \right\rfloor \cdot |G|, \] which
is equivalent to
\[
|G| \left(\frac{2^{\lfloor (p+o-1)n/(o \cdot
p)}\rfloor}{o-1}+\left\lfloor {\frac n {25}}
\right\rfloor\right)<2^n.
\]
Since $n$ is odd and $|G|$ is odd, we know that \[|G|=|V||S| \leq
|V|\cdot |V|^{1.5}/(24)^{1/3}=n^{2.5}/(24)^{1/3}\] by
Lemma~\ref{lemma-manz-wolf}. Moreover, $(p+o-1)/(o\cdot p)\leq 5/9$
as both $o$ and $p$ are at least $3$. Thus, it is sufficient to
prove





\[\frac 1 {24^{1/3}} \cdot n^{2.5} \left(\frac {2^{\lfloor\frac{5n}{9} \rfloor}} 2+ \left\lfloor {\frac n {25}}
\right\rfloor\right)  < 2^{n}.\] It is not hard to see that this
inequality is satisfied when $n \geq 25$. When $n \leq 24$, we have
$\lfloor {\frac n {25}} \rfloor =0$, and the result follows by
Gluck's theorem that $G$ has at least one strongly regular orbit on
$\mathscr{P}(\Omega)$, see \cite[Theorem 5.6]{manz-wolf}.
\end{proof}

The following is the main result of this section, and it is a
crucial step in the proof of Theorem~\ref{theorem2}.

\begin{theorem} \label{thm3}
Let $G$ be a group of odd order. Let $V$ be a finite, faithful, and
irreducible $G$-module. Assume $G$ is induced from a non-semi-linear
group acting on $W\leq V$, then $G$ has at least $\max (n,212)$
regular orbits on $V$, where $n=\dim(V)$.
\end{theorem}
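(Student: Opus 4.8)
The plan is to exploit the imprimitive (induced) structure to embed $G$ into a wreath product, and then to build regular orbits by a colouring argument: Theorem~\ref{thm2} supplies many regular orbits on the ``inner'' factor $W$, while Lemma~\ref{lemoddperm} breaks the permutation symmetry on the blocks.

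First I would make the induced structure explicit. Writing $W = W_1$ and letting $W_1, \dots, W_t$ be the $G$-conjugates of $W$, we have $V = W_1 \oplus \cdots \oplus W_t$, and $G$ embeds into the wreath product $T \wr S$, where $T \le \GL(W)$ is the (odd-order, quasi-primitive, non-semi-linear) image of $\Stab_G(W)$ acting on $W$ and $S \le \Sy_t$ is the transitive odd-order group induced on the blocks; here $n = \dim V = t \cdot \dim W$. Since any $v \in V$ with $\bC_{T \wr S}(v) = 1$ automatically has $\bC_G(v) = 1$, and since distinct regular $(T\wr S)$-orbits are disjoint and each is a union of regular $G$-orbits, it suffices to bound below the number of regular orbits of $T \wr S$ itself; this lets me ignore the (possibly proper) inclusion $G \le T\wr S$. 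The core is then a colouring correspondence. By Theorem~\ref{thm2}, $T$ has $r := \max(212, \dim W + 1)$ regular orbits on $W$; fix one representative in each. For $v = (w_1, \dots, w_t)$ with every $w_i$ such a representative, I record the colouring $c \colon \{1,\dots,t\} \to \{1,\dots,r\}$ sending $i$ to the regular $T$-orbit of $w_i$. A direct check on the wreath action shows $\bC_{T\wr S}(v) = 1$ precisely when $\Stab_S(c) = 1$, and that two such vectors lie in the same $(T\wr S)$-orbit exactly when their colourings are $S$-conjugate. Hence the number of regular $(T\wr S)$-orbits is at least the number of regular $S$-orbits on the set of $r$-colourings of $\{1,\dots,t\}$, and the problem becomes purely combinatorial.

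For the key case where $S$ is primitive I would invoke Lemma~\ref{lemoddperm}: it yields at least $\lceil t/25\rceil$ strongly regular $S$-orbits on $\mathscr{P}(\{1,\dots,t\})$. Each representative $\Delta$ gives a $2$-colouring (colour $a$ on $\Delta$, colour $b \neq a$ off $\Delta$) whose $S$-stabiliser is $\Stab_S(\Delta) = 1$; the ``strongly regular'' condition $|\Delta| \neq t/2$ guarantees $\Delta$ and its complement lie in distinct $S$-orbits, so complementation does not collapse the count. Letting the ordered colour pair $(a,b)$ range over the $r(r-1)$ choices and $\Delta$ over (half of) the $\lceil t/25\rceil$ orbit representatives produces at least $\tfrac12\lceil t/25\rceil\, r(r-1)$ distinct regular $(T\wr S)$-orbits. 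Feeding in $r \ge \dim W + 1$ when $\dim W$ is large and $r \ge 212$ when $\dim W$ is small, one checks this quantity is at least $\max(n,212) = \max(t\dim W, 212)$.

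The remaining, and main, obstacle is the general transitive but imprimitive $S$. Here I would argue by induction on $\dim V$: choose a nontrivial block system, giving $S \le \bar{S} \wr \hat{S}$ with $\hat{S}$ primitive, re-associate $T \wr S \le (T \wr \bar{S}) \wr \hat{S}$, and apply the inductive hypothesis to the inner induced group $T \wr \bar{S}$ (still induced from the non-semi-linear $T$) to obtain $R \ge \max(\dim\bar{V}, 212)$ regular orbits on $\bar{V} = W^{\bar{t}}$. Treating these $R$ regular orbits as the available colours, the primitive-case argument applied to $\hat{S}$ closes the induction, with the base case $t = 1$ being exactly Theorem~\ref{thm2}. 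The delicate points will be verifying that the numerical bound $\max(n,212)$ survives each inductive step—propagating both the $212$ threshold and the dimension term $t\dim W$ through the products—and handling the small parameter ranges (small $t$ or $\dim W$) where $\lceil t/25\rceil = 1$ forces one to rely on the large supply of colours rather than on the permutation combinatorics.
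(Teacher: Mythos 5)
Your proposal is correct and takes essentially the same route as the paper's proof: the quasi-primitive base case via Theorem~\ref{thm2}, reduction to a wreath product whose top group permutes the blocks primitively, and the count of $\tfrac{1}{2}\left\lceil \frac{m}{25} \right\rceil a(a-1)$ regular orbits obtained by mixing two regular-orbit representatives across the strongly regular set-orbits supplied by Lemma~\ref{lemoddperm}. The only differences are organizational: the paper achieves primitivity in one step by choosing $N$ maximal with $V_N$ decomposable and inducts on the section $H=\bN_G(V_1)/\bC_G(V_1)$ inside $G\leq H\wr S$ (verifying trivial centralizers in $G$ directly), whereas you pass to the full wreath product and re-associate nested block systems---the same argument with different bookkeeping.
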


\begin{proof}
We first assume that $V$ is a quasi-primitive $G$-module.
Thus $G$ is not semi-linear. By Theorem~\ref{thm2}, $G$ has at least $\max (n, 212)$ regular orbits on $V$.

Thus the action of $G$ on $V$ is not quasi-primitive and there
exists a normal subgroup $N$ of $G$ such that $V_N=V_1 \oplus \dots
\oplus V_m$ for $m>1$ homogeneous components $V_i$ of $V_N$. If $N$
is maximal with this property, then $S=G/N$ primitively permutes the
$V_i$'s. Also $V=V_1^G$, induced from $\bN_G(V_1)$. Let
$H=\bN_G(V_1)/\bC_G(V_1)$, then $H$ acts faithfully and irreducibly
on $V_1$ and $G$ is isomorphic to a subgroup of $H \wr S$.

If $x_1 \in V_1$ and $C_1$ is the $H$-conjugacy class of $x_1$, then
the only $G$-conjugates of $x_1$ in $V_1$ are the elements of $C_1$.
The set of $G$-conjugates of $x_1$ is $C_1 \cup \dots \cup C_m$
where $C_i \subset V_i$ is a $G$-conjugate of $C_1$. Choose $y_1 \in
V_1$ in an $H$-conjugate class different than that of $x_1$. Also
choose $x_i$ and $y_i$ in $V_i$ conjugate to $x_1$ and
$y_1$(respectively) for all i. Then no $x_i$ is ever $G$-conjugate
to a $y_j$. In particular, if $g \in G$ centralizes
$v=x_1+\dots+x_j+y_{j+1}+\dots+y_m$, then $g$ and $Ng$ must
stabilize $\{V_1,\dots,V_j\}$ and $\{V_{j+1},\dots,V_m\}$.

Set $\dim(V_1)=l$, and we know that $n=m \cdot l$. By induction, $H$
has at least $a=\max (l, 212)$ regular orbits on $V_1$. Say $u_1,
v_1, w_1$ are three elements in distinct such orbits. Let $u_i \in
V_i$ be $G$-conjugates of $u_1$ $(1 \leq i \leq m)$, let $v_i \in
V_i$ be $G$-conjugates of $v_1$ $(1 \leq i \leq m)$, and let $w_i
\in V_i$ be $G$-conjugates of $w_1$ $(1 \leq i \leq m)$. Suppose
that $\Omega$ can be written as a disjoint union of $A_1, A_2$ $(A_i
\neq \varnothing$, $i=1,2)$ such that $\Stab_S(A_1) \cap
\Stab_S(A_2)=1$ and $|A_1|<|A_2|$. Define $x=x_1+x_2+ \dots + x_m$
such that $x_i=u_i$ if $i \in A_1$, and $x_i=v_i$ if $i \in A_2$.
Then $\bC_G(x)=\bC_N(x)=1$. Define $y=y_1+y_2+\dots + y_m$ such that
$y_i=v_i$ if $i \in A_1$, and $y_i=u_i$ if $i \in A_2$. We see that
$x$ is not $G$-conjugate to $y$ since $|A_1| \neq |A_2|$. Define
$z=z_1+z_2+\dots + z_m$ such that $z_i=u_i$ if $i \in A_1$, and
$z_i=w_i$ if $i \in A_2$. Then $\bC_G(z)=\bC_N(z)=1$. It is easy to
see that $x$ and $z$ are not $G$-conjugate. Thus using the pair
$A_1$ and $A_2$ we may construct $a(a-1)$ regular orbits of $G$ on
$V$.

We now suppose that $\Omega$ can be written as a disjoint union of
$B_1, B_2$ $(B_i \neq \varnothing$, $i=1,2)$ such that $\Stab_S(B_1)
\cap \Stab_S(B_2)=1$ and $|B_1|<|B_2|$. Then, in a similar way, we
may construct $a(a-1)$ regular orbits of $G$ on $V$. We observe that
as long as $A_1$ and $B_1$ are in different set-orbits of $G/N$ on
the power set of $\Omega$, the regular orbits constructed using
$A_1$, $A_2$ and the ones constructed using $B_1$, $B_2$ are in
different $G$-orbits. By Lemma~\ref{lemoddperm}, there are at least
$\lceil\frac m {25}\rceil /2$ such pairs. Thus $G$ has at least
\[\frac{a(a-1) \cdot \lceil\frac m {25}\rceil}{2} \] regular orbits on $V$.
Clearly \[\frac{a(a-1) \cdot \lceil\frac m {25}\rceil}{2} \geq \max
(n,212),\] as desired.
\end{proof}



\section{The explicit bound}

We are now ready to prove the explicit bound in
Theorem~\ref{theorem2}, which is restated below.

\begin{theorem}\label{theorem3}
Let $G$ be an odd-order group acting on an abelian group $V$ of
order $2^n$. Then $k(GV)>n$.
\end{theorem}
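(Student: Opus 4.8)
The plan is to induct on $n=\dim_{\FF_2}V$, after standard reductions, splitting the inductive step according to the module structure of $V$. First I would record the reductions from Section~3: by Feit--Thompson \cite{FT63} the group $G$ is solvable, and by the Hartley--Turull lemma \cite[Lemma~2.6.2]{hartley-turull} I may replace $V$ by a permutation-isomorphic elementary abelian group of the same order, so that $V$ becomes a faithful (after passing to $G/\bC_G(V)$, which only decreases the class number) $\FF_2G$-module of dimension $n$. Throughout I would lean on the class-number formula $k(GV)=\sum_i k(\bC_G(a_i))$ over orbit representatives $a_i$ \cite[Proposition~3.1]{Smith07}; since the zero vector contributes $k(G)$ and each nonzero orbit contributes at least $1$, this already gives the universal baseline $k(GV)\ge k(G)+(2^n-1)/|G|$, which disposes of the regime where $|G|$ is small.

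Next I would reduce to $V$ irreducible. As $|G|$ is odd, Maschke's theorem makes $V$ completely reducible, so if $V$ is reducible I may pick a proper submodule $V_1$ and fibre the projection $V\to V/V_1$. The crucial point is coprimality: since $\gcd(|G|,|V_1|)=1$ we have $H^1(\bC_G(\bar v),V_1)=0$, so every fibre carries a fixed point of its stabiliser and hence $k(GV)=\sum_{\bar O}k(\bC_G(\bar v)\,V_1)$, where $\bar O$ runs over the $G$-orbits on $V/V_1$. Using $k(HW)\ge k(H)+(|W|-1)/|H|$ fibrewise together with the identity $\sum_{\bar O}1/|\bC_G(\bar v)|=|V/V_1|/|G|$, and symmetrically projecting onto a complement, I would obtain a bound of the shape $k(GV)>\tfrac n2+|V|/(2|G|)$, which again settles the case of small $|G|$ and feeds the induction on the smaller-dimensional modules $V_1$ and $V/V_1$.

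For $V$ irreducible I would use a dichotomy. If $G$ is induced from a non-semi-linear group on a block $W\le V$, then Theorem~\ref{thm3} (ultimately Theorem~\ref{thm2}) produces at least $\max(n,212)\ge n$ regular orbits; each contributes $1$ and the zero orbit contributes $k(G)\ge 1$, so $k(GV)\ge n+1>n$ at once. Otherwise $G$ is semi-linear throughout, i.e.\ $G\le \Gamma(2^m)\wr S$. In the quasi-primitive case $G\le\Gamma(2^n)$ the group is metacyclic: writing $C=G\cap\FF_{2^n}^\times$, the field part $C$ is cyclic and normal, $G/C$ embeds into $\Gal$ and is cyclic of order dividing $n$, and since $C$ acts freely on $V\setminus\{0\}$ every nonzero point stabiliser embeds into $G/C$ and is therefore cyclic. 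Hence $k(\bC_G(a))=|\bC_G(a)|$ for $a\neq0$, and a convexity (AM--GM) estimate on $\sum_{\text{orbits}}|G|/|O|$ gives $k(GV)\gtrsim \tfrac1{|G:C|}\big(|C|+(2^n-1)/|C|\big)\ge 2\sqrt{2^n-1}/n$, which exceeds $n$ for all but finitely many $n$; the remaining small $n$ I would check directly. The imprimitive semi-linear case I would treat by combining this block estimate with the permutation bounds Lemma~\ref{lem1} and Lemma~\ref{lemoddperm} to control $S$ and the class number $k(G)$.

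The main obstacle is precisely the semi-linear case. There the number of (regular) orbits can be far smaller than $n$---already $\Gamma(2^m)$ on $\FF_{2^m}$ has only two orbits---so the crude orbit count is useless and one is forced to track the class numbers $k(\bC_G(a_i))$ of the stabilisers themselves; it is the cyclicity of these stabilisers together with the convexity argument that saves the quasi-primitive case. Reconciling this with the imprimitive semi-linear groups, where $|G|$ is comparable to $|V|$ and no regular orbits need exist, and matching the bookkeeping with the reducible induction across the ``large $|G|$'' regime, is the delicate part; a handful of small-dimensional configurations will have to be settled by explicit computation.
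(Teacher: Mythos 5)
Your overall skeleton does track the paper: the faithfulness and Hartley--Turull reductions, the reduction to an irreducible module via a fibrewise class-number identity (this is exactly the paper's use of \cite[Lemma 4.3]{kellerkgv}, though the paper runs it through a minimal counterexample to get the clean bound $k(GV)>2t\ge s+t=n$ rather than your vaguer $k(GV)>\tfrac n2+|V|/(2|G|)$), the dichotomy via Theorem~\ref{thm3}, and the convexity estimate $|K|+(|V|-1)/|K|\ge 2\sqrt{|V|-1}$ in the quasi-primitive semi-linear case. One caveat already there: the paper additionally observes that $l=|G/K|$ is odd and that $l=n$ forces $k(GV)>k(G/K)=n$ outright, so $l\le n/3$; with that refinement the resulting inequality $n^2\ge 3\sqrt3\cdot 2^{n/2}$ fails for \emph{every} $n$, whereas your weaker bound $2\sqrt{2^n-1}/n>n$ leaves cases such as $n=12$ open, and "checking small $n$ directly" there is nontrivial since $|G|$ can be as large as $n(2^n-1)$.

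The genuine gap is the imprimitive semi-linear case, which is precisely where the paper does the bulk of its work (Steps 5 and 7). The tools you name are not fit for it: Lemma~\ref{lemoddperm} is used in the paper only inside the proof of Theorem~\ref{thm3}, where strongly regular set-orbits of $S$ are converted into regular orbits of $G$ on $V$ by choosing representatives from at least two distinct regular orbits of the block group $H$ on $V_1$; a semi-linear block group may have \emph{no} regular orbits at all (for $H=\Gamma(2^k)$ every nonzero vector has stabiliser of order $k$), so that construction collapses, and Lemma~\ref{lem1} only bounds the top group from above, which by itself points the wrong way. What is missing are the two external inputs the paper combines to force $n\le 15$: Bertram's theorem $k(KV)>|KV|^{1/3}$ \cite{bertram}, applied to the metabelian normal subgroup $KV$ where $K$ is an abelian subgroup, normal in $G$, of index at most $k^m$ in the core $N$, which together with $|G/N|\le(\sqrt3)^{m-1}$ yields $k(GV)\ge 2^{n/3}/(\sqrt3\,n)^{n/k}$; and Seager's Theorem~\ref{thmseager}(i), which bounds the number of blocks logarithmically, $n/k\le 0.157\log_3((n(G,V)+1.43)/24^{1/3})\le 0.157\log_3((n+1.43)/24^{1/3})$, using $n(G,V)\le k(GV)\le n$ for a counterexample. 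Without some substitute for this Bertram--Seager mechanism you have no control on $m=n/k$ when $|G|$ is comparable to $|V|$ and no regular orbits exist. Moreover, even after $n\le 15$ is secured, eight configurations $(n,k)$ survive, each dispatched in the paper by ad hoc counting (the orbit bound $k(GV)\ge n/k+1$, classes from elements of specific prime orders, quotient classes, and $(|V|-1)/|G|$ estimates); calling this "a handful of small-dimensional configurations settled by explicit computation" understates a substantial finite analysis, and as written your outline does not close.
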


\begin{proof}Let $(G,V$ be a counterexample such that $GV$ has minimal order.
In particular, we have $k(GV)\leq n$. We proceed in a number of
steps.\\

{\bf Step 1.} $G$ acts faithfully on $V$.\\

As in the proof of Theorem~\ref{theorem-111}, if $G$ is not faithful
on $V$ then $1<\bC_G(V)\unlhd GV$ and $G/\bC_G(V)$ acts (faithfully)
on $V$; moreover, by the choice of $G$ and $V$, we have
$k((GV)/N)=k((G/N)V
>n$, and since clearly $k(GV)\geq k((GV)/N)$, we obtain $k(GV)>n$,
contradicting
$G$ and $V$ being a counterexample. So $G$ is indeed faithful on $V$.\\

{\bf Step 2.} $V$ is elementary abelian.\\

Using the $k(GV)$-formula  $k(GV)\ =\
\sum\limits_{i=1}^{n(G,V)}k(\bC_G(v_i))$ (where $n(G,V)$ is the
number of orbits of $G$ on $V$ and the $v_i\in V$ ($1\leq i \leq
n(G,V)$) are representatives of the orbits of $G$ on $V$) and the
Hartley-Turull Lemma \cite[Lemma 2.6.2]{hartley-turull}, and arguing
similarly as in the proof of Theorem~\ref{theorem-111},
we may assume that $V$ is elementary abelian.\\

{\bf Step 3.} $V$ is an irreducible $G$-module.\\

By Steps 1 and 2 we know that $V$ can be viewed as a faithful
$G$-module over the field with two elements, and since $G$ acts
coprimely on $V$, by Maschke's theorem we know that $V$ is
completely reducible as $G$-module.

Now suppose that $V=X\oplus Y$ for two nontrivial $G$-modules $X$
and $Y$. Write $|X|=2^s$ and $|Y|=2^t$. Then $s+t=n$, $s\geq 1$ and
$t\geq 1$. By interchanging $X$ and $Y$ if necessary we may assume
that $s\leq t$. Also note that since $X$ is nontrivial, we have
$n(G,X)\geq 2$. We now apply \cite[Lemma 4.3]{kellerkgv} to deduce
that
\begin{align*}k(GV)\ &=\ \sum\limits_{i=1}^{n(G,X)}k(C_G(x_i)Y)\ >\
\sum\limits_{i=1}^{n(G,X)}t\ \\
&=\ n(G,X)\ t\ \geq 2t\ \geq\ s+t\ =\ n,\end{align*} where the
$x_i\in X$ ($1\leq i \leq n(G,X)$) are representatives of the orbits
of $G$ on $X$. This contradicts $(G,V)$ being a counterexample, and
so we now have that $V$ is irreducible as a $G$-module.\\

{\bf Step 4.} $V$ is induced from a semi-linear group action. That
is, there exists a $H\leq G$ and an irreducible $H$-module
$W$ such that $W<V$, $H/\bC_H(W)\leq\Gamma(W)$, and $V=W^G$.\\

Assume not and that $H$ is not semi-linear, then the result follows by Theorem ~\ref{thm3}.\\

{\bf Step 5.} $n\leq 15$.\\

Write $|W|=2^k$, so that $H/\bC_H(W)\leq\Gamma(2^k)$ and hence
$|H/\bC_H(W)|\leq (2^k-1)k$. Also let $N:=\cap_{g\in G}H^g\unlhd G$
be the core of $H$ in $G$, then \[V_N=W_1\oplus\dots\oplus W_m,\]
where $m=n/k$ and the $W_i$s are homogeneous $N$-modules which are
faithfully permuted by $G/N$, and we may assume that $W=W_1$. Hence
$G/N$ is isomorphic to an odd order subgroup of $S_m$, and thus by
Lemma~\ref{lem1} we have
\[|G/N|\leq (\sqrt{3})^{m-1}.\]
Moreover, clearly $|N|\leq (2^k-1)^mk^m\leq 2^nk^m$, and $N$ has an
abelian subgroup $K$ which is normal in $G$ and whose index in $N$
is at most $k^m$. Therefore $KV$ is a metabelian normal subgroup of
$G$, and by \cite[Theorem 1]{bertram} we know that
\[k(KV)>|KV|^{\frac{1}{3}}.\]
We now get \begin{align*} \ n\ &\geq\ k(GV)\geq
\frac{k(KV)}{|GV:KV|} \ \geq\ \frac{|KV|^{\frac{1}{3}}}{|G:K|}\\
&\geq\ \frac{|V|^{\frac{1}{3}}}{(\sqrt{3})^{m-1}k^m}\ \geq\
\frac{2^{\frac{n}{3}}}{(\sqrt{3}\ k)^{\frac{n}{k}}}\ >\
\frac{2^{\frac{n}{3}}}{(\sqrt{3}\ n)^{\frac{n}{k}}}. \end{align*}

We next use Seager's result Theorem \ref{thmseager}. Since $V$ is
induced from a semi-linear group, we are in Part (a) of that result.
Thus we have
\begin{eqnarray*}
\frac{n}{k}\ &\leq\ & 0.157 \log_3((n(G,V) + 1.43)/24^{1/3})\\
                  &\ <\ &0.157 \log_3((k(GV) + 1.43)/24^{1/3})\\
                  &\ \leq\ &0.157 \log_3((n + 1.43)/24^{1/3}). \\
\end{eqnarray*}

 Two inequalities in the last two paragraphs yields
\[n\ >\
\frac{2^{\frac{n}{3}}}{(\sqrt{3}\ n)^{0.157 \log_3((n +
1.43)/24^{1/3})}},  \] and hence
\[2^{\frac{n}{3}}\ \ <\ n\ \cdot\ (\sqrt{3}\ n)^{0.157 \log_3((n + 1.43)/24^{1/3})}.\]
This turns out to be true only when $n\leq 15$, as claimed.\\

{\bf Step 6.} $n>k$, where we recall that $k=\log_2|W|$.\\

Suppose that $n=k$, so by Steps 4 and 5 we have $G\leq \Gamma(2^n)$
where $n\leq 15$. If $n=1$, we get a contradiction immediately, so
we may assume that $n\geq 3$ (as $|G|$ is odd).

Write $\Gamma_0(2^n)$ for the cyclic subgroup of $\Gamma(2^n)$
consisting of scalar multiplications when considered acting on the
field of $2^n$ elements. Let $K=G\cap \Gamma_0(2^n)$, so $K$ is
cyclic and normal in $G$ of order dividing $2^n-1$ and acts
frobeniusly on $V$. Let $l=|G/K|$, so $l$ divides $n$. If $l=n$,
then we get $k(GV)> k(GV/(KV))=k(G/K)=l=n$, a contradiction.
Therefore, since $|G|$ is odd and $n\geq 3$, we get $l\leq n/3$.

Now we have
\[\ \frac{n^2}{3} \ \geq\ l\cdot n\ \geq l\ k(GV)\ \geq k(KV)\ \geq\ |K|+\frac{|V|-1}{|K|}.\]
It is an easy exercise to check that the function
$f(x)=x+\frac{|V|-1}{x}$  ($0<x<|V|$) attains its minimum value at
$x=\sqrt{|V|-1}$, and this minimum value is $2\sqrt{|V|-1}$. It
follows that
\[\frac{n^2}{3}\ \geq 2\sqrt{|V|-1}\  =\ \sqrt{4|V|-4}\ \geq\ \sqrt{3|V|}\ =\ \sqrt{3} \cdot 2^{n/2}.\]
(The last inequality holds since $n\geq 3$.) Thus $n^2\geq\
3\sqrt{3}\cdot 2^{n/2}$, and it is easy to verify that
this inequality holds for no positive integer $n$. This contradiction completes the proof of Step 6.\\

{\bf Step 7.} Final contradiction.\\

We keep using the above notation. Clearly, the action of $G$ on $V$
will have at least $m=n/k$ distinct orbits, and each of these orbits
is a conjugacy class of $GV$. Hence
\[(++)\ \ \ \ k(GV)\geq \frac{n}{k} +1,\]
 and we
will use this as we go through the remaining small cases. Recall
that $k(GV)\leq n$, we must have $k>1$.

By Steps 5 and 6 and the facts that $|G|$ is odd and $k>1$, the
remaining cases that we have to look at are the following:
\begin{itemize}
\item $n=15$, $k=3$;

\item $n=15$, $k=5$;

\item $n=14$, $k=2$;

\item $n=12$, $k=4$;

\item $n=12$, $k=2$;

\item $n=10$, $k=2$;

\item $n=9$, $k=3$;

\item $n=6$, $k=2$.
\end{itemize}

We now consider each of these cases separately.\\

$n=15$, $k=3$: Since the only odd order subgroups of $S_5$
containing a 5-cycle are the Sylow $5$-subgroups, we have $G
\lesssim \Gamma(2^3) \wr Z_5$ and $|G|$ divides $7^5\cdot3\cdot
5\cdot 5$. We get six conjugacy classes from $(++)$ and four more
from elements of order 5. If $3^3$ divides $|G|$, then, as $Z_5$
permutes the elements of order three in orbits of lengths 1 and
five, we get six more conjugacy classes of $G$ from elements of
order 3, contradicting $k(GV)\leq n=15$. So $|G|$ is not divisible
by $3^3$. In particular, the Hall $\{3,5\}$-subgroup of $G$ has
order no more than 45. If $7^3$ divides $|G|$, then we see that we
get more than $7^3/45$, that is, at least eight more classes of $GV$
from elements of order 7, again leading to a contradiction to
$k(GV)\leq 15$. Hence $|G|$ divides $5\cdot 3^2\cdot 7^2$. Now $G$
acts on the nonzero elements of $V$, and hence $k(GV)\geq
(|V|-1)/|G| +1\geq 8^5/(45\cdot 49) +1>15.5$, so
$k(GV)\geq 16$, again a contradiction, concluding this case.\\

$n=15$, $k=5$: Then $G \lesssim \Gamma(2^5) \wr Z_3$. From $(++)$ we
get four classes of $GV$, and from elements of order three at least
two more classes. If $5^3$ divides $G$, we get $(5^3-1)/3
>40$ more classes from elements of order 5, a contradiction. So
$|G|$ is not divisible by $5^3$. Also, if $31^2$ divides $|G|$, then
we get at least $(31^2-1)/(5^2\cdot3)>12$ conjugacy classes from
elements of order 31, bringing the total for $k(GV)$ above 15, a
contradiction. Thus we now know that $|G|$ divides $3\cdot 5^2\cdot
31$, and as in the previous case we then see that $k(GV)\geq
(|V|-1)/|G| +1\geq 8^5/(75\cdot 31)>15$, the final
contradiction in this case.\\

$n=14$, $k=2$: Then by \cite[Corollary 2.25]{manz-wolf} we know that
$G \lesssim \Gamma(2^2) \wr (Z_7 \cdot Z_3)$, so by the oddness of
$|G|$, we even have $G \lesssim Z_3 \wr (Z_7 \cdot Z_3)$.  From
$(++)$ we get eight classes of $GV$ inside $V$. Bringing back the
notation from Step 5, we recall that $V_N=W_1\oplus\dots\oplus W_7$
for subspaces $W_i$s which are transitively permuted by $G$, and we
write $K$ for the kernel of this permutation action. Then $K$ is an
elementary abelian 3-group of order at most $3^7$, and $|G/K|$
divides 21. Since clearly 7 divides $|G|$, from $G/K$, which is
cyclic of order 7 or Frobenius of order 21, we get five conjugacy
classes. Now if $3^4$ divides $|K|$, then we get an additional at
least $(3^4-1)/21$, that is three conjugacy classes of $GV$ coming
from (nontrivial) elements in $K$, bringing the total above 14, a
contradiction. Hence $|K|$ divides $3^3$ and hence $|G|$ divides
$27 \cdot 21$. But then as before we see that $k(GV)\geq
(|V|-1)/|G|\geq (2^{14}-1)/(27 \cdot 21)
>28$, which gives a final contradiction in this case.\\

$n=12$, $k=4$: Then $G \lesssim \Gamma(2^4) \wr Z_3$ and then, as
$|G|$ is odd, even $G \lesssim Z_{15}\wr Z_3$.  From $(++)$ we get
four classes, and from the ``$Z_3$ on top'' we get two additional
classes. If $5^2$ divides $|G|$, then from elements of order 5 we
get $(5^2-1)/3=8$ more classes, bringing the number of conjugacy
classes of $GV$ to 12. So there cannot be any other elements in the
group, forcing $|G|=5^2\cdot 3=75$. But then $k(GV)\geq (|V|-1)/|G|
=(2^{12}-1)/75=4095/75 > 12$, a contradiction.

Now we have $5^2$ does not divide $|G|$. For the number $n(G,V)$ of
orbits of $G$ on $V$ we have $n(G, V)\geq (|V|-1)/|G|\geq \lceil
4095/(5\cdot 3^3\cdot 3)\rceil=11$, which together with the  two
classes mentioned above from the factor group of order 3 increases
the number of classes of $GV$ to more than 13, a contradiction.\\

$n=12$, $k=2$: In this case $G/N$ would be a transitive permutation
group of degree 6 and as such not be primitive, it follows that
$W_i$'s ($i=1,\dots, 6$) would split into three blocks of
imprimitivity of size 2 or two blocks of size 3. Since $G/N$ is of
odd order, it could not act
transitively on the $W_i$, a contradiction.\\

$n=10$, $k=2$: Then $G \lesssim \Gamma(2^2) \wr Z_5$ and hence even
 $G \lesssim Z_3 \wr Z_5$. By $(++)$ we have six classes of
$GV$ in $V$, and from the factor group of order 5 we get an
additional four classes, so these must be all conjugacy classes of
$GV$, implying
$|G|=5$, but then $k(GV)\geq (|V|-1)/|G|\geq 1023/5\geq 11$, a contradiction.\\

$n=9$, $k=3$: Here $G \lesssim \Gamma(2^3) \wr Z_3$,  so by $(++)$
we get four classes of $k(GV)$. Again recall that $V_N=W_1\oplus
W_2\oplus W_3$ for subspaces $W_i$s which are transitively permuted
by $G$, and $K$ is the kernel of this permutation action. Then $K$
is an abelian $\{3, 7\}$-group of order at most $15^3$, and $G/K$ is
cyclic of order 3. So from $G/K$ we get another two classes. Now if
$3^2$ divides $|K|$, then from elements of order 3 in $K$ we get
additional more than $8/3$, that is, three classes, implying $k(GV)=
10>n$, a contradiction. 

Therefore $3^2$ does not divide $|K|$. Now if $7^2$ divides $|G|$,
then observe that we get at least $48/9$, thus six classes from
elements of order 7, and together with the four classes from $(++)$
this again leads to a contradiction. Thus $7^2$ does not divide
$|G|$, so $|G|$ divides 21 and thus  $k(GV)\geq (|V|-1)/|G|\geq
511/21>10$, the
final contradiction in this case.\\

$n=6$, $k=2$: In this last case, $G \lesssim \Gamma(2^2) \wr Z_3$
and thus $G \lesssim Z_3 \wr Z_3$.  From $(++)$ we get four classes,
and from the factor group of order three we get two more. This
forces $k(GV)=6$ and moreover $|G|=3$. Now $k(GV)\geq
(|V|-1)/|G|\geq 61/3>7$. This final contradiction concludes the
proof of the theorem.
\end{proof}

\begin{proof}[Proof of Theorem \ref{theorem1}]
Theorem~\ref{theorem1} follows from Theorem~\ref{theorem3} and
\cite[Theorem 1]{Malle-Spath}, as discussed in the Introduction.
\end{proof}




\begin{thebibliography}{ABCDF}

\bibitem[Al67]{Alspach} B. Alspach, A combinatorial proof of a
conjecture of Goldberg and Moon, \emph{Canad. Math. Bull.}
\textbf{11} (1968), 655--661.



\bibitem[BMT17]{Baumeister-Maroti-TongViet}
B. Baumeister, A. Mar\'{o}ti, and H.\,P. Tong-Viet, Finite groups
have more conjugacy classes, \emph{Forum Math.} \textbf{29} (2017),
259--275.


\bibitem[Be06]{bertram}
{E.\,A. Bertram}, Lower bounds for the number of conjugacy classes
in finite groups, Ischia group theory 2004, 95-117, Contemp. Math.,
402, Israel Math. Conf. Proc., Amer. Math. Soc., Providence, RI,
2006.

\bibitem[Di67]{Dixon}
J.\,D. Dixon, The maximum order of the group of a tournament,
\emph{Canad. Math. Bull.} \textbf{10} (1967), 503--505.


\bibitem[Do05]{SD}
{S. Dolfi}, Intersections of odd order Hall subgroups, \emph{Bull.
Lond. Math. Soc.} \textbf{37} (2005), 61--66.

\bibitem[Es91]{AE2}
{A. Espuelas}, Regular orbits on symplectic modules, \emph{J.
Algebra } \textbf{138} (1991), 1--12.

\bibitem[FT63]{FT63}
 W. Feit and J.\,G. Thompson,
 Solvability of groups of odd order,
 {\em Pacific J. Math.} {\bf 13} (1963), 775--1029.


\bibitem[HT94]{hartley-turull}
B. Hartley and A. Turull, On characters of coprime operator groups
and the Glauberman character correspondence,  \emph{J. Reine Angew.
Math.} \textbf{451} (1994), 175--219.

\bibitem[IMN07]{Isaacs-Malle-Navarro}
I.\,M. Isaacs, G. Malle, and G. Navarro, A reduction theorem for the
McKay conjecture, \emph{Invent. Math.} \textbf{170} (2007), 33--101.


\bibitem[Ke06]{kellerkgv}
{T.\,M. Keller}, Fixed conjugacy classes of normal subgroups and the
$k(GV)$-problem, \emph{J. Algebra} \textbf{305} (2006), 457--486.

\bibitem[Ke11]{Keller}
T.\,M. Keller, Finite groups have even more conjugacy classes,
\emph{Israel J. Math.} \textbf{181} (2011), 433--444.


\bibitem[L1903]{Landau}
E. Landau, \"{U}ber die Klassenzahl der bin\"{a}ren quadratischen
Formen von negativer Discriminante, \emph{Math. Ann.} \textbf{56}
(1903), 671--676.


\bibitem[MS16]{Malle-Spath}
G. Malle and B. Sp\"{a}th, Characters of odd degree, \emph{Ann. of
Math.} (2) \textbf{184} (2016), 869--908.

\bibitem[MM16]{Malle-Maroti}
G. Malle and A. Mar\'{o}ti, On the number of $p'$-degree characters
in a finite group, \emph{Int. Math. Res. Not.} \textbf{20} (2016),
6118--6132.

\bibitem[MW93]{manz-wolf}
{O. Manz and T.\,R. Wolf}, Representations of Solvable Groups,
Cambridge University Press, 1993.

\bibitem[Ma16]{Maroti16}
A. Mar\'{o}ti, A lower bound for the number of conjugacy classes of
a finite group, \emph{Advances in Math.} \textbf{290} (2016),
1062--1078.

\bibitem[Mc72]{McKay}
J. McKay, Irreducible representations of odd degree, \emph{J.
Algebra} \textbf{20} (1972), 416--418.

\bibitem[Se88]{seager}
{S.\,M. Seager}, A bound on the rank of primitive solvable
permutation groups, \emph{J. Algebra} \textbf{116} (1988), 342--352.

\bibitem[Sc07]{Smith07}
 P. Schmid, The solution of the $k(GV )$ problem, ICP Advanced Texts in Mathematics, 4.
Imperial College Press, London, 2007.


\bibitem[Ya12]{Yang1}
Y. Yang, Orbits of the actions of odd-order solvable groups,
\emph{Comm. Algebra} \textbf{40} (2012), 565--574.




\end{thebibliography}
\end{document}